\definecolor{myGreen}{rgb}{0,.5,0}
\definecolor{black}{rgb}{0,0,0}
\definecolor{myYellow}{rgb}{.6,.6,.1}
\definecolor{Cyan}{rgb}{.2,.6,.7}
\definecolor{white}{rgb}{1,1,1}
\definecolor{Purple}{rgb}{.4,0,1}
\definecolor{deepblack}{rgb}{.53,.29,.24}
\definecolor{Black}{rgb}{0,0,0}
\definecolor{Grey}{rgb}{.45,.45,.45}
\newcommand{\op}[1]{\left(#1\right)}
\newcommand{\abs}[1]{\left\vert#1\right\vert}
\newcommand{\C}{\mathcal{C}}
\newcommand{\multiset}[2]{\ensuremath{\left(\kern-.3em\left(\genfrac{}{}{0pt}{}{#1}{#2}\right)\kern-.3em\right)}}
\definecolor{cb1}{RGB}{230,159,0}
\definecolor{cb4}{RGB}{86,180,233}
\definecolor{cb2}{RGB}{0,158,115}
\definecolor{cb5}{RGB}{0,114,178}
\definecolor{cb3}{RGB}{213,94,0}
\definecolor{cb6}{RGB}{204,121,167}
\tikzset{vtx/.style={circle,black,fill=black,inner sep=1pt,minimum size=7pt}}
\tikzset{whiteVtx/.style={circle,white,fill=white,inner sep=1pt}}
\tikzset{goldVtx/.style={circle,white,fill=CUGold,inner sep=1pt}}
\tikzset{cb1Vtx/.style={circle,white,fill=cb1,inner sep=1pt}}
\tikzset{cb2Vtx/.style={circle,white,fill=cb2, inner sep=1pt}}
\tikzset{cb3Vtx/.style={circle,white,fill=cb3, inner sep=1pt}}
\tikzset{cb4Vtx/.style={circle,white,fill=cb4, inner sep=1pt}}
\tikzset{cb5Vtx/.style={circle,white,fill=cb5, inner sep=1pt}}
\tikzset{blackVtx/.style={circle,white,fill=black,inner sep=1pt}}
\tikzset{type/.style={draw=cb6,very thick,fill=yellow}}
\tikzset{type2/.style={draw=cb6,very thick,fill=yellow,inner sep=1pt,minimum size=8pt}}
\tikzset{type3/.style={circle,draw=black,fill=white,inner sep=1pt,minimum size=7pt}}
\tikzset{e1/.style={line width=4pt,cb3}}
\tikzset{e2/.style={line width=4pt,cb4}}
\tikzset{e3/.style={line width=4pt,cb2}}
\tikzset{eq/.style={line width=4pt,gray!40}}
\tikzset{dedge/.style={->,line width=1.3pt}} 
\newtheorem{theorem}{Theorem}
\newtheorem{thm}{}[section]
\newtheorem{claim}[thm]{Claim}
\newtheorem{conjecture}[theorem]{Conjecture}
\newtheorem{proposition}[theorem]{Proposition}
\newtheorem{problem}{Problem}
\numberwithin{subcase}{case}
\theoremstyle{definition}
\renewcommand{\C}{\mathcal{C}}
\newcommand{\OO}{\begin{tikzpicture}[scale=0.15]
	\fill[black]  (-1,0) arc (180:0:1) ;
	\fill[black]  (-1,0) arc (0:180:-1) ;
	\draw[black] (0,0) circle (1) ;
    \end{tikzpicture}}
\newcommand{\OI}{\begin{tikzpicture}[scale=0.15]
	\fill[black]  (-1,0) arc (180:0:1) ;
	\draw[black] (0,0) circle (1) ;
    \end{tikzpicture}}
\newcommand{\II}{\begin{tikzpicture}[scale=0.15]
	\draw[black] (0,0) circle (1) ;
	\draw[black] (-1,0) -- (1,0) ;
    \end{tikzpicture}}
\newcommand{\IO}{\begin{tikzpicture}[scale=0.15]
	\fill[black]  (-1,0) arc (0:180:-1) ;
	\draw[black] (0,0) circle (1) ;
    \end{tikzpicture}}
\tikzset{circle split part fill/.style args={#1,#2}{%
 alias=tmp@name, 
  postaction={%
    insert path={
     \pgfextra{%
     \pgfpointdiff{\pgfpointanchor{\pgf@node@name}{center}}%
                  {\pgfpointanchor{\pgf@node@name}{east}}%
     \pgfmathsetmacro\insiderad{\pgf@x}
      \fill[#1] (\pgf@node@name.base) ([xshift=-\pgflinewidth]\pgf@node@name.east) arc
                          (0:180:\insiderad-\pgflinewidth)--cycle;
      \fill[#2] (\pgf@node@name.base) ([xshift=\pgflinewidth]\pgf@node@name.west)  arc
                           (180:360:\insiderad-\pgflinewidth)--cycle;            
         }}}}}  
\newcommand{\oururl}{\url{http://lidicky.name/pub/tournaments}}
\begin{document}


\title{Inducibility of 4-vertex tournaments}
	
\author{
Dalton Burke\thanks{Department of Mathematical and Statistical Sciences, University of Colorado Denver, E-mail: {\tt Dalton.Burke@ucdenver.edu} .} 
\and
Bernard Lidick\'{y}\thanks{Department of Mathematics, Iowa State University, Ames, IA, E-mail: {\tt lidicky@iastate.edu}. Research of this author is supported in part by NSF grants DMS-1855653 and DMS-2152490 and by Scott Hanna fellowship.}
\and
Florian Pfender\thanks{Department of Mathematical and Statistical Sciences, University of Colorado Denver, E-mail: {\tt 
Florian.Pfender@ucdenver.edu}. Research is partially supported by NSF grants DMS-1855622 and DMS-2152498.} 
\and
Michael Phillips\thanks{Department of Mathematical and Statistical Sciences, University of Colorado Denver, E-mail: {\tt Michael.2.Phillips@ucdenver.edu} .} 
}

\maketitle
	
\setlength{\abovedisplayskip}{5pt}
\setlength{\belowdisplayskip}{5pt}

\begin{abstract}
We determine the inducibility of all tournaments with at most $4$ vertices together with the extremal constructions.
The $4$-vertex tournament containing an oriented $C_3$ and one source vertex has a particularly interesting extremal construction, first conjectured by Bo\.{z}yk, Grzesik and Kielak. It is an unbalanced blow-up of an edge, where the sink vertex is replaced by a quasi-random tournament and the source vertex is iteratively replaced by a copy of the construction itself.
\end{abstract}

\section{Introduction}
One of the central questions in extremal graph theory is to maximize the number of induced copies of a given graph $H$ in a larger host graph on a fixed number of vertices. 
Denoting the number of vertices by of a graph $G$ by $|G|$,
let $I(H,G)$ be the number of vertex subsets of $G$ which induce a graph isomorphic to $H$, and let
\[
I(H,n)=\max_{|G|=n}I(H,G).
\]
We normalize these definitions and write $i(H,G)=\frac{I(H,G)}{{|G|\choose |H|}}$
 and $i(H,n)=\frac{I(H,n)}{{n\choose |H|}}$.
 This implies that $0\le i(H,G)\le 1$, and we can think of $i(H,G)$ as a subgraph density. An easy averaging argument shows that $i(H,n)$ is monotone decreasing and thus converges for $n \to \infty$. Pippenger and Golumbic~\cite{PippengerGolumbic} define the {\em inducibility of $H$} as the limit of this quantity,
\[
i(H)=\lim_{n\to\infty} i(H,n).
\]
Determining inducibilities is notoriously difficult, and the answer is known only for very few explicit graphs $H$. A major breakthrough for the problem was the introduction of the flag algebra method by Razborov \cite{razborov} in 2007, and since then the inducibility of a good number of small graphs has been determined with the help of this method~\cite{MR3386015,MR3425964,nets}. While we are using this method as well in this paper, we will not thoroughly explain it here but rather direct the reader to earlier papers~\cite{BDL2020,MR4271644,MR3934375}. In a nutshell, the method uses semidefinite programming to solve an optimization problem on subgraph densities which can be set up in a very structured and easily computer assisted way, almost to the point where one may call it fully automated. We can add any number of linear constraints on the subgraph densities to the semidefinite program. Nevertheless, we do not even know $i(P_4)$, i.e. the inducibility of the path on four vertices, and we do not even have a conjecture for the answer. 

On the other end of the spectrum, Fox, Huang, and Lee~\cite{Fox}, and independently Yuster~\cite{Yuster}, have determined exact values for $i(H,n)$ and thus $i(H)$ for all $n$ and almost all large  enough graphs $H$ by studying random graphs. They show that the extremal construction is an iterated blow-up of the given graph, a fractal like structure. This iterated blow-up construction was already established by Pippinger and Golumbic as a general lower bound for inducibilities, and they asked which graphs meet this lower bound.
There are numerous other results on inducibility~\cite{MR1292981,MR4040054,MR3861783,MR3856711,MR3667662,MR3269907,fox2019inducibility,lidicky2021c5,liu2020stability}.

All of these questions can be studied for directed graphs as well, the definitions naturally transfer. 
Falgas-Ravry and Vaughan~\cite{MR2988862} studied inducibility of small outstars using flag algebras. 
Huang~\cite{Huang2014} extended the result to all outstars. 
This was further generalized to other stars by Hu, Ma, Norin, and Wu~\cite{hu2020inducibility}. Short paths with further restrictions were considered in~\cite{MR4109639} and orientations of a 4-cycle in~\cite{HLPV202X}.
In an REU (Research Experience for Undergraduates) in 2018, Burgher and Burke studied and conjectured extremal constructions for most oriented graphs (directed graphs without $2$-cycles) of up to $4$ vertices using the flag algebra method. In a similar and independent project around the same time, Bo\.{z}yk, Grzesik and Kielak~\cite{bozyk2020inducibility} established the same and more bounds and constructions for oriented graphs.

In this paper, we look closer at the tournaments in this list, i.e. oriented complete graphs. The number of non-isomorphic tournaments on $k$ vertices is slightly smaller than the number of graphs, and flag algebra computations tend to have similar power. The two projects mentioned in the previous paragraph both found inducibility bounds and closely matching lower bound constructions for all tournaments on up to $4$ vertices, where the results are easy or trivial for all but three of these $8$ small tournaments. These last three tournaments on $4$ vertices have very interesting constructions, and in this paper we prove that these constructions are indeed optimal for large $n$.

In a somewhat related question, Mubayi and Razborov~\cite{Dhruv} considered edge colored tournaments and showed that for every tournament $T$ on $k \geq 4$ vertices whose edges are colored by $\binom{k}{2}$ distinct colors, the iterated blow-up of $T$ achieves $i(T,n)$. This implies that $i(T) =  \frac{k!}{k^k-k}$ in this rainbow setting.

\section{Results}

We discuss tournaments on at most four vertices.
For the tournaments $T_1$ and $T_2$ on one and two vertices, respectively, any tournament $T$ has $i(T_k,T)=1$, and thus $i(T_k)=i(T_k,n)=1$. Similarly, for all transitive tournaments $TT_k$ on $k\ge 3$ vertices, the transitive tournament $TT_n$ on $n\ge k$ vertices is the unique tournament on $n$ vertices with $i(TT_k,T)=1$, and thus $i(TT_k)=i(TT_k,n)=1$. On the other hand, $i(TT_3,T)$ is minimized exactly if $T$ has all out-degrees in $\{\frac{n-2}{2},\frac{n-1}{2},\frac{n}{2}\}$. This easily follows from counting $TT_3$ by first choosing the source vertex, and then any two out-neighbors. As a consequence, one gets for the only other tournament on three vertices $C_3$:
\begin{proposition}[Folklore]\label{C3}
 The number of induced copies of $C_3$ is maximized if and only if every vertex of a tournament has out-degree in $\{\frac{n-2}{2},\frac{n-1}{2},\frac{n}{2}\}$. 
 \end{proposition}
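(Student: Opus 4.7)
The plan is to use the standard identity that every $3$-element vertex subset of a tournament induces either $TT_3$ or $C_3$, and that $TT_3$ has a unique source vertex. First I would observe that
\[
I(C_3,T)+I(TT_3,T)=\binom{n}{3},
\]
so maximizing $I(C_3,T)$ is equivalent to minimizing $I(TT_3,T)$. Counting each transitive triangle by its unique out-degree-$2$ vertex gives
\[
I(TT_3,T)=\sum_{v\in V(T)}\binom{d^+(v)}{2}.
\]

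Next, I would apply strict convexity of $f(x)=\binom{x}{2}$ on the integers to the sum above, under the constraint $\sum_v d^+(v)=\binom{n}{2}$. Because $\binom{a+1}{2}+\binom{b-1}{2}-\binom{a}{2}-\binom{b}{2}=a-b+1>0$ whenever $a\ge b$, the sum is minimized exactly when the multiset of out-degrees is as balanced as possible, i.e.\ any two out-degrees differ by at most $1$. Since the average out-degree is $(n-1)/2$, this means: if $n$ is odd, every $d^+(v)=(n-1)/2$, and if $n$ is even, every $d^+(v)\in\{(n-2)/2,n/2\}$. In either case $d^+(v)\in\{(n-2)/2,(n-1)/2,n/2\}$, giving the necessity direction of the proposition.

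For sufficiency, I would exhibit tournaments attaining the balanced degree sequence, so that the convex lower bound on $\sum_v\binom{d^+(v)}{2}$ is actually achieved and any balanced tournament attains it. For $n$ odd, take the rotational tournament on $\Z_n$ with $i\to j$ iff $j-i\pmod n\in\{1,2,\ldots,(n-1)/2\}$; all out-degrees equal $(n-1)/2$. For $n$ even, one standard construction is to start from a rotational regular tournament on $n-1$ vertices and add a new vertex $u$ that beats exactly half of the existing vertices; the resulting out-degrees lie in $\{(n-2)/2,n/2\}$, and any tournament with such a degree sequence meets the convex lower bound with equality.

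The main (mild) obstacle is really just the equality-case analysis: I must be careful to invoke \emph{strict} convexity to conclude that any tournament attaining the maximum number of $C_3$'s must have a balanced degree sequence, and to verify that every balanced degree sequence is realizable so the bound is tight. Once these two points are in hand, the equivalence follows immediately from the identity $I(C_3,T)=\binom{n}{3}-\sum_v\binom{d^+(v)}{2}$.
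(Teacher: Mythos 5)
Your proposal is correct and takes essentially the same approach the paper sketches: write $I(C_3,T)=\binom{n}{3}-I(TT_3,T)$, count $I(TT_3,T)=\sum_v\binom{d^+(v)}{2}$ by choosing the source vertex of each transitive triangle, and invoke strict convexity under the constraint $\sum_v d^+(v)=\binom{n}{2}$. You spell out the realizability of (near-)regular score sequences, which the paper treats as self-evident; that detail is needed for the ``if'' direction and your version is a bit more complete for it.
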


This implies $i(C_3) = 1/4$ and leaves us with three $4$-vertex tournaments to consider, see Figure~\ref{4vtxTours}: the tournaments we get from $C_3$ by adding a source vertex ($C_3^+$), a sink vertex ($C_3^-$), and by adding a vertex of out-degree $1$ or $2$ (this choice results in isomorphic outcomes, the carousel $C_4$ defined in the next paragraph).

\begin{figure}[ht] \centering
\begin{tikzpicture}[scale=1.5]
\node[vtx] (a1) at (0,0) {};
\node[vtx] (a2) at (0,1) {};
\node[vtx] (a3) at (1,0) {};
\node[vtx] (a4) at (1,1) {};

\node[vtx] (b1) at (0+3,0) {};
\node[vtx] (b2) at (0+3,1) {};
\node[vtx] (b3) at (1+3,0) {};
\node[vtx] (b4) at (1+3,1) {};

\node[vtx] (c1) at (0+6,0) {};
\node[vtx] (c2) at (0+6,1) {};
\node[vtx] (c3) at (1+6,0) {};
\node[vtx] (c4) at (1+6,1) {};

\node[vtx] (d1) at (0+9,0) {};
\node[vtx] (d2) at (0+9,1) {};
\node[vtx] (d3) at (1+9,0) {};
\node[vtx] (d4) at (1+9,1) {};

\draw[dedge] (a1) -> (a2); \draw[dedge] (a1) -> (a3); \draw[dedge] (a1) -> (a4);
\draw[dedge] (a2) -> (a3); \draw[dedge] (a2) -> (a4); \draw[dedge] (a3) -> (a4);

\draw[dedge] (b1) -> (b2); \draw[dedge] (b1) -> (b3); \draw[dedge] (b1) -> (b4);
\draw[dedge] (b2) -> (b3); \draw[dedge] (b4) -> (b2); \draw[dedge] (b3) -> (b4);

\draw[dedge] (c2) -> (c1); 
\draw[dedge] (c3) -> (c1); \draw[dedge] (c4) -> (c1);
\draw[dedge] (c2) -> (c3); \draw[dedge] (c4) -> (c2); \draw[dedge] (c3) -> (c4);

\draw[dedge] (d1) -> (d2); \draw[dedge] (d1) -> (d3); \draw[dedge] (d4) -> (d1);
\draw[dedge] (d2) -> (d3); \draw[dedge] (d4) -> (d2); \draw[dedge] (d3) -> (d4);

\node at (0.5,-0.5) {$TT_4$};
\node at (3.5,-0.5) {$C_3^+$};
\node at (6.5,-0.5) {$C_3^-$};
\node at (9.5,-0.5) {$C_4$};
\end{tikzpicture}
\caption{The four 4-vertex tournaments.}\label{4vtxTours}
\end{figure}
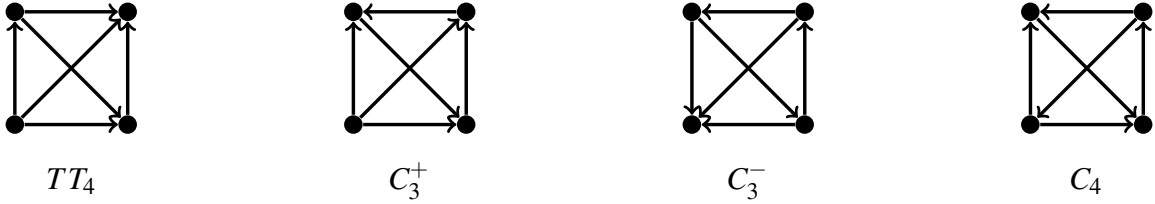

Let us now define the class $\C_n$ of {\em carousels} on $n \ge 3$ vertices. A tournament $T$ is in $\C_n$ if its vertices can be labeled $\{v_1,v_2,\ldots,v_n\}$ such that $v_iv_j\in E(T)$ if $0<j-i<\frac{n}{2}$ or if $-n<j-i<-\frac{n}{2}$. An easy exercise shows that a tournament $T$ is in $\C_n$ if and only if for every $x \in V(T)$, the in- and out-neighborhoods induce transitive tournaments ($T$ is \emph{locally transitive}) and are as balanced as possible ($T$ is \emph{balanced} when $|V(T)|$ is odd, or \emph{nearly balanced} when $|V(T)|$ is even). See Figure~\ref{fig:carousels} for an illustration.

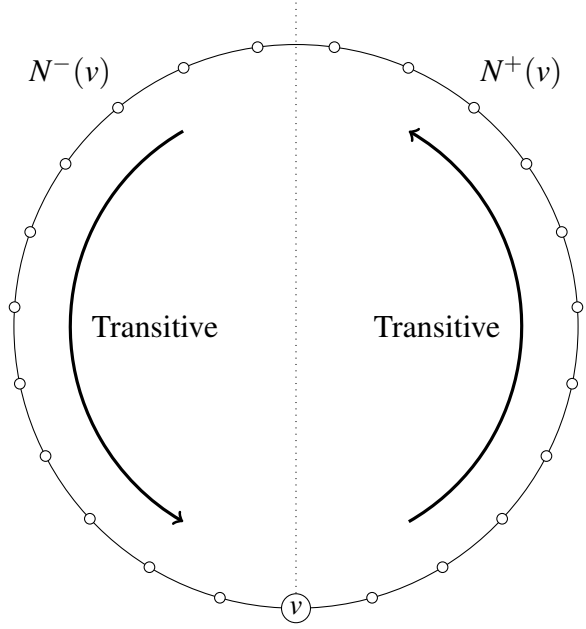
\begin{figure}
\begin{center}
\begin{tikzpicture}[scale=.75]
	\draw (0,0) circle (5cm);
	\node[inner sep=0.5mm,draw=black,fill=white,shape=circle] (v) at (0,-5) {\small $v$};
	\foreach \i in {1,...,22} {
		\node[inner sep=0.5mm,draw=black,fill=white,shape=circle] (v\i) at ({360/23 * \i - 90}:5) {};
	}
	\draw[dotted] (v) -- (0,5.8);
	\node at (-4,4.5) {$N^-(v)$}; \node at (4,4.5) {$N^+(v)$};
	\node at (2.5,0) {Transitive};
	\node at (-2.5,0) {Transitive};
	\draw[very thick,->] (2,{-sqrt(12)}) arc (-60:60:4cm);
	\draw[very thick,->] (-2,{sqrt(12)}) arc (120:240:4cm);
\end{tikzpicture}
\end{center}
\caption{For odd $n$, the carousel $C \in \C_n$ is unique and vertex transitive. For even $n$, the directions of the diagonals can be chosen arbitrarily, resulting in several non-isomorphic tournaments.}\label{fig:carousels}
\end{figure}

Observe that for odd $n$ and for $n=4$ (up to isomorphism), $\C_n$ contains exactly one tournament, and we will call this unique carousel $C_n$. For even $n\ge 6$, $\C_n$ contains more than one tournament, depending on the directions of the arcs $v_iv_{i+\frac{n}2}$. For even $n$, we denote by $C_n\in\C_n$ the unique tournament we get from deleting one vertex in $C_{n+1}$. Note that one can alternatively construct $C_n$ from $C_{n-1}$ by duplicating one vertex and adding the edge between the two otherwise identical vertices in either direction.

Our first result describes precisely all extremal constructions for $I(C_4,n)$ for large enough $n$.
\begin{theorem}\label{TC4_Full}
For $n\ge 4$, the tournaments maximizing $I(C_4,T)$ are precisely the tournaments in $\C_n$. Consequently,
$i(C_4)=\frac12$, and for every $n$, we have 
\[
I(C_4,n) = \begin{cases}
	\frac{n(n^2-1)(n-3)}{48} &  \text{if $n$ is odd,} \\
	\frac{n(n^2-4)(n-3)}{48} &  \text{if $n$ is even.}
\end{cases}
\]
\end{theorem}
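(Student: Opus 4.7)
The plan is to derive a clean identity for $I(C_4, T)$ in terms of in/out-degrees and the counts of the other three 4-vertex tournaments, then extract an upper bound via convexity, and finally show the carousels are the only extremisers. Partitioning the $4$-subsets of $V(T)$ by induced type gives the trivial identity $\binom{n}{4} = I(TT_4, T) + I(C_3^+, T) + I(C_3^-, T) + I(C_4, T)$. A $4$-subset $S$ has a unique source exactly when it induces $TT_4$ or $C_3^+$, so double-counting yields $\sum_v \binom{d^+(v)}{3} = I(TT_4, T) + I(C_3^+, T)$ and symmetrically $\sum_v \binom{d^-(v)}{3} = I(TT_4, T) + I(C_3^-, T)$. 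Eliminating $I(TT_4, T)$ from these gives the key identity
\[I(C_4, T) = \binom{n}{4} - \tfrac{1}{2}\sum_{v \in V(T)}\left[\binom{d^+(v)}{3} + \binom{d^-(v)}{3}\right] - \tfrac{1}{2}\bigl(I(C_3^+, T) + I(C_3^-, T)\bigr).\]

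Next I would drop the non-negative last bracket and minimise the remaining sum by convexity of $\binom{\cdot}{3}$. Since $d^+(v) + d^-(v) = n - 1$, the per-vertex quantity $\binom{d^+(v)}{3} + \binom{d^-(v)}{3}$ is minimised when the two degrees split as evenly as possible, giving $2\binom{(n-1)/2}{3}$ for odd $n$ and $\binom{n/2}{3} + \binom{(n-2)/2}{3}$ for even $n$. Summing over $v$, halving, and subtracting from $\binom{n}{4}$ yields, after routine algebra, exactly $\frac{n(n^2-1)(n-3)}{48}$ for odd $n$ and $\frac{n(n^2-4)(n-3)}{48}$ for even $n$. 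For the matching construction, every $C \in \mathcal{C}_n$ is locally transitive and balanced by definition, so $I(C_3^+, C) = I(C_3^-, C) = 0$ and convexity holds with equality at every vertex, and the identity then immediately gives $I(C_4, C)$ equal to the formula.

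Equality throughout the bound chain requires both $I(C_3^+, T) = I(C_3^-, T) = 0$---equivalently, every $N^+(v)$ and $N^-(v)$ is transitive, i.e.\ $T$ is locally transitive---and the balanced-degree condition. The final step, and the main technical obstacle, is then to prove that every locally transitive, balanced tournament on $n$ vertices belongs to $\mathcal{C}_n$. My plan for this is to pick a vertex $v$, list $N^+(v) = \{v_1, \ldots, v_{d^+(v)}\}$ and $N^-(v) = \{u_1, \ldots, u_{d^-(v)}\}$ in their unique transitive orderings, and show that the cyclic sequence $v, v_1, \ldots, v_{d^+(v)}, u_1, \ldots, u_{d^-(v)}$ realises the carousel adjacency rule $v_i \to v_j$ iff $(j - i) \bmod n \in \{1, \ldots, \lfloor (n-1)/2 \rfloor\}$, with only the diameter arcs ($j - i \equiv n/2$) free when $n$ is even.

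The crucial local step is a short degree count at $v_{d^+(v)}$ and $v_1$. Balance forces $v_{d^+(v)}$'s out-neighbourhood to be essentially $N^-(v)$---fully equal for odd $n$, or all but at most one for even $n$---so in particular $v_{d^+(v)} \to u_1$. Transitivity of $N^+(v_1)$ combined with an in-degree count at $u_1$ then pins down $v_1$'s unique out-neighbour in $N^-(v)$ as $u_1$ itself, so the cyclic order from $v_1$'s perspective coincides with the one from $v$'s perspective shifted by one position. Iterating this propagates the carousel structure around the entire cycle and establishes $T \in \mathcal{C}_n$. The even-$n$ case of this propagation, where the balance condition interacts non-trivially with the freedom of diameter arcs, is the part I expect to require the most care.
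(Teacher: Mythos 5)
Your key identity is correct and is derived differently from the paper's. You double-count sources and sinks among $4$-subsets to get $\sum_v\binom{d^+(v)}{3}=I(TT_4,T)+I(C_3^+,T)$ and its sink analogue, eliminate $I(TT_4,T)$ against the partition $\binom{n}{4}=I(TT_4,T)+I(C_3^+,T)+I(C_3^-,T)+I(C_4,T)$, then drop the non-negative $I(C_3^+,T)+I(C_3^-,T)$ term and minimize the degree sum by convexity of $\binom{\cdot}{3}$; the algebra does reproduce the two stated closed forms. The paper instead starts from the density identity $i(C_3,T)=\tfrac12 i(C_4,T)+\tfrac14 i(C_3^+,T)+\tfrac14 i(C_3^-,T)$ and reads off that a maximizer of $I(C_4,T)$ must simultaneously maximize $I(C_3,T)$ (handled by Proposition~\ref{C3}) and satisfy $I(C_3^+,T)=I(C_3^-,T)=0$. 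Both routes land on the same characterization of extremizers --- near-regular and locally transitive --- but yours is slightly more self-contained since it replaces the appeal to Proposition~\ref{C3} with a direct convexity computation. The place where you genuinely diverge is the final uniqueness step. The paper assumes $T\notin\C_n$, transitively orders $N^+(v_1)$ and $N^-(v_1)$, and shows that any arc $v_iv_j$ violating the carousel rule forces an induced $C_3^+$ (or symmetrically $C_3^-$) via a short out-degree count on $v_j$; this contradiction argument handles even $n$ uniformly because the free diameter arcs simply never qualify as violations. Your constructive propagation --- pin down $v_1$'s unique out-neighbor in $N^-(v)$ as $u_1$ via degree counting, then iterate the cyclic shift --- does work for odd $n$ when the counts are fully written out, but you are right to flag the even case: with $\pm 1$ slack in out-degrees and genuinely free diameter arcs, statements like ``$v_{d^+(v)}$'s out-neighborhood is essentially $N^-(v)$'' and the subsequent pin-down of $u_1$ are all off by one, and the iteration should not (and cannot) determine a unique tournament, since $\C_n$ has several members for even $n\ge 6$. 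That even-$n$ propagation is the one piece of your argument that is not yet a proof; the paper's contradiction-style argument sidesteps the difficulty entirely.
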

Note that the asymptotic statement that $i(C_4)=\frac12$ is also proved in~\cite{MR3549509} and~\cite{bozyk2020inducibility}, with proofs very similar to the one we provide in the next section. Our contribution here is the proof of the exact construction. Numeric bounds from flag algebra computations indicate that a similar statement may also be true for $C_5$, $C_6$, $C_7$ and $C_8$, and we conjecture it is true for all $k$. 
See the discussion at the end of this paper for a few more details on this.
Observe that for $k\ge 5$ and even $n\ge k$, $C_n$ contains more copies of $C_k$ than the other members of $\C_n$, so our conjectured extremal tournament is unique for $k\ge 5$.
\begin{conjecture}\label{conTC}
For all $k\ge 5$ and $n\ge k$, the unique $n$-vertex tournaments maximizing $I(C_k,T)$ are the tournaments $C_n$. 
\end{conjecture}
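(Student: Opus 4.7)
My plan is to split the proof into two steps: first, establish that any extremal tournament lies in the carousel family $\C_n$, generalizing the $C_4$ argument of Theorem~\ref{TC4_Full}; second, show that within $\C_n$, the specific tournament $C_n$ uniquely maximizes $I(C_k,\cdot)$ for $k\ge 5$.

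For the first step, I would extend the local switching method. Suppose $T$ is extremal and some out-neighborhood $N^+(x)$ contains a directed triangle $\{a,b,c\}$ with $a\to b\to c\to a$; reversing a well-chosen arc should not decrease the number of induced copies of $C_k$, and should strictly increase it unless all such triangles vanish. The verification requires, for each $k$-subset $S\ni a,b$, comparing whether $S$ induces $C_k$ in $T$ versus in the modified tournament $T'$; since $C_k$ is itself locally transitive, one expects the switching sign to go in the correct direction on aggregate. A parallel cross-vertex switching argument then enforces degree balance, placing $T$ in $\C_n$. For the second step, note that for odd $n$ we have $\C_n=\{C_n\}$, so uniqueness is immediate. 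For even $n$, each $T\in\C_n$ is determined by the orientations of the $n/2$ diametric arcs $v_iv_{i+n/2}$, and using the identity $(n+1)I(C_k,C_n)=(n+1-k)I(C_k,C_{n+1})$ arising from vertex-transitivity of $C_{n+1}$, the problem reduces to a finite combinatorial comparison showing that the fully-aligned carousel $C_n$ strictly beats all other configurations in $\C_n$.

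The main obstacle is establishing local transitivity for general $k\ge 5$. The $C_4$ argument is tractable because only small local substructures need to be inspected, but for larger $k$ the number of induced copies of $C_k$ passing through a fixed local configuration grows, and the sign of a single switching step is no longer obviously favorable. Worse still, a naive induction on $n$ via the averaging identity $(n-k)I(C_k,T)=\sum_v I(C_k,T-v)$ does not close, since $C_n-v$ is not in general isomorphic to $C_{n-1}$ (one can check, for instance, that $C_6-v_1$ is locally transitive but not balanced). A possible workaround is to apply flag algebras to pin down the asymptotic extremal structure and then prove a stability statement that lifts this to finite $n$ --- though stability is often as delicate as the original problem. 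Alternatively, one could seek a monovariant, perhaps polynomial in the out-degree sequence or an entropy-like quantity, that captures the ``distance to $\C_n$'' and is provably monotone under extremal perturbations.
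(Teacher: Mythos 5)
This statement is a \emph{conjecture} in the paper, not a proven theorem; there is no proof for you to be compared against. The authors explicitly write in the Discussion that they ``have not seriously tried to show the full conjecture,'' offering only flag-algebra evidence for $k\in\{5,6,7,8\}$. So the premise of treating your sketch as a competing proof is moot --- but it is still worth noting that your own text correctly identifies that it is not a proof: you acknowledge that the sign of a single arc reversal is ``no longer obviously favorable'' for $k\ge 5$, and that the naive averaging induction on $n$ does not close (your observation that $C_6-v_1$ is locally transitive but not balanced is correct). Those are precisely the places where the argument is missing, so the proposal as written does not establish Conjecture~\ref{conTC}.

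Two more specific remarks. First, your second step does not close even granting the first: the vertex-transitivity identity $(n+1)I(C_k,C_n)=(n+1-k)I(C_k,C_{n+1})$ (valid for even $n$, odd $n+1$) only evaluates $I(C_k,\cdot)$ at the specific carousel $C_n$. It gives no handle on the other members of $\C_n$, which differ from $C_n$ by the orientations of the $n/2$ diametric arcs and are \emph{not} obtained by deleting a vertex from $C_{n+1}$; the ``finite combinatorial comparison'' you invoke is exactly the open content. Second, the paper's Discussion contains a different and useful reduction that your proposal does not use: expressing $i(C_k,T)$ in terms of $(k+1)$-vertex subtournament densities, analogously to equation~\eqref{eqTC}, and observing that every $C_n[X\cup\{x\}]$ with $C_n[X]\cong C_k$ is a copy of $C_{k+1}$, so that the truth of the conjecture for $k$ implies it for $k+1$. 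This reduces the problem to odd $k$, and in particular would dispose of the even-$n$ tiebreak among $\C_n$ at the same time --- a cleaner route than the diametric-arc case analysis you propose, but one that still leaves the odd-$k$ base cases entirely open.
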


The only tournaments on $4$ vertices left to consider are the two tournaments $C_3^-$ and $C_3^+$. As one gets $C_3^-$ from $C_3^+$ by reversal of all arcs, the tournaments extremal for $C_3^-$ are precisely the reversals of the tournaments extremal for $C_3^+$, so it suffices to only study $C_3^+$.
Consider the following probabilistic construction of a tournament $\tilde{T}_n$ on $n$ vertices which was discovered independently by Burgher and Burke, and in~\cite{bozyk2020inducibility} with an almost matching upper bound via the flag algebra method. For some fixed $\alpha\in (0,1)$, partition the vertices into two sets $H_n$ (for high out-degree) and $L_n$ (for low out-degree) of size $\lceil\alpha n\rceil$ and $\lfloor (1-\alpha)n\rfloor$, respectively. On the set $L_n$, direct the edges uniformly at random, i.e. insert a random tournament $R$ on ${\lfloor(1-\alpha)n\rfloor}$ vertices. All arcs between the sets are directed from $H_n$ to $L_n$. On the set $H_n$, iterate the construction, i.e. insert the tournament $\tilde{T}_{\lceil \alpha n\rceil}$ inductively. See Figure~\ref{fig:iter} for a sketch of the iterated construction.

Note that with probability approaching $1$ for large $n$, we have $i(H,R)=\mathbb{E}(i(H,R))+o(1)$ for every tournament $H$ in a random tournament $R$ on $n$ vertices. We may thus choose a (quasi-random) sequence of tournaments $R_n$ on $n$ vertices with $i(H,R_n)=\mathbb{E}(i(H,R))+o(1)$, and use this sequence in place of the probabilistic construction described above.

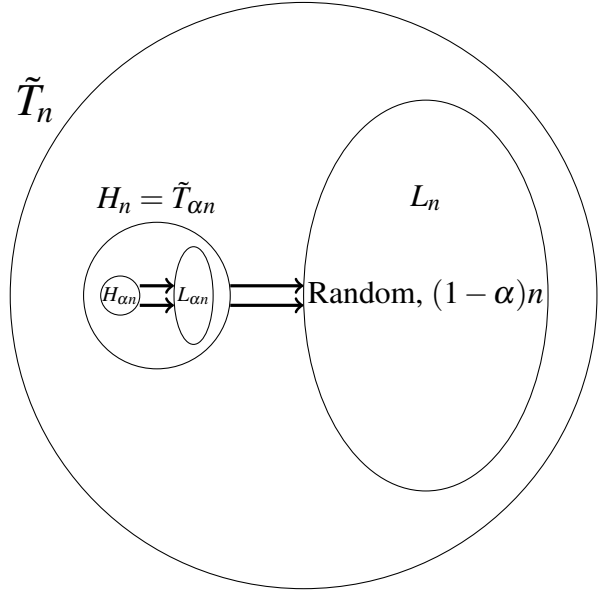
\begin{figure}
\begin{center}
\begin{tikzpicture}[scale=0.65]
    \draw (0,0) circle (6cm);
    \node (Tn) at (-5.5,4) {\Large $\tilde{T}_n$};
	\draw (-3,0) circle (1.5cm);
	\node (H) at (-3,2) {$H_n = \tilde{T}_{\alpha n}$};
	\draw (-3.75,0) circle (.4cm); 
	\node (han) at (-3.75,0) {\scriptsize $H_{\alpha n}$};
	\draw (-2.25,0) ellipse (.4cm and 1cm); \node (lan) at (-2.25,0) {\scriptsize $L_{\alpha n}$};
	\draw[->,very thick] (-3.35,0.2) -- (-2.65,0.2);
	\draw[->,very thick] (-3.35,-.2) -- (-2.65,-.2);
	\draw (2.5,0) ellipse (2.5cm and 4cm);
	\node (QR) at (2.5,2) {$L_n$};
	\node (QR1) at (2.5,0) {Random, $(1-\alpha)n$};
	\draw[->,very thick] (-1.5,0.2) -- (0,0.2);
	\draw[->,very thick] (-1.5,-.2) -- (0,-.2);
\end{tikzpicture}
\end{center}
\caption{A construction asymptotically maximizing the number of copies of  $C_3^+$. For $\alpha \in [0,1]$ and $n$ sufficiently large, this construction $\tilde{T}_n$ can be decomposed into subtournaments $L_n$, of size about $(1-\alpha)n$, and $H_n$, of size about $\alpha n$ with the properties shown above.}\label{fig:iter}
\end{figure}

In this construction, all copies of $C_3^+$ lie completely in $H_n$, completely in $L_n$, or have exactly one vertex in $H_n$ and three vertices forming a $C_3$ in $L_n$. 
Notice that $i(C_3,R_{\lfloor(1-\alpha)n\rfloor}) = 1/4+o(1)$ and $i(C_3^+,R_{\lfloor(1-\alpha)n\rfloor}) = 1/8+o(1)$.
As $i(C_3^+,T_{\lceil \alpha n\rceil})=i(C_3^+,T_n)+o(1)$, we have
\begin{align*}
i(C_3^+,T_n)&=\alpha^4i(C_3^+,T_n)+4\alpha(1-\alpha)^3i(C_3,R_{\lfloor(1-\alpha)n\rfloor})+(1-\alpha)^4i(C_3^+,R_{\lfloor(1-\alpha)n\rfloor})
+o(1)\\
&=\alpha^4i(C_3^+,T_n)+4\alpha(1-\alpha)^3\tfrac14+(1-\alpha)^4\tfrac18+o(1),
\end{align*}
so
\[
i(C_3^+,T_n)=\frac{\alpha(1-\alpha)^3+\tfrac18(1-\alpha)^4}{1-\alpha^4}+o(1). 
\]
Maximizing this quantity gives us
$\alpha = \frac{1}{5}(2\sqrt[3]{9}-2 - \sqrt[3]{3} ) \approx 0.1435836$,
and
\[
i(C_3^+,T_n)+o(1)=\frac18\left( 8-9\sqrt[3]{3}+3\sqrt[3]{9}\right)\approx 0.157500667.
\]
We show in Section~\ref{sec:SR4} 
that all large extremal tournaments for $C_3^+$ essentially look this way. While we can not determine exactly the extremal tournaments $T_n$, we can at least say that the limit object is a unique graphon (when the original definition of graphons is transferred to the tournament setting). 

\begin{theorem}\label{SR4}
Let $(T_n)_{n=1}^\infty$ be a sequence of tournaments on $n$ vertices with $I(C_3^+,T_n)=I(C_3^+,n)$. Let $\alpha = \frac{1}{5}(2\sqrt[3]{9}-2 - \sqrt[3]{3} ) $.
For sufficiently large $n$, the vertex set of $T_n$ can be partitioned into sets $L_n$ and $H_n$ so that 
$|H_n|=\alpha n+o(1)$,
all arcs between these sets are from $H_n$ to $L_n$, the sequence of tournaments $(T_n[L_n])_{n=1}^\infty$ is quasi-random, and $I(C_3^+,T_n[H_n]) = I(C_3^+,|H_n|)$. Hence
\[
i(C_3^+) = \frac18\left( 8-9\sqrt[3]{3}+3\sqrt[3]{9}\right)\approx 0.157500667.
\]
\end{theorem}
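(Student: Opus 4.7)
The plan is as follows. The construction already described in the excerpt provides the lower bound matching the claimed value of $i(C_3^+)$, so the theorem reduces to an upper bound together with a structural characterization of the asymptotically extremal tournaments. The upper bound would be established by the flag algebra method of Razborov, and the structural description would follow from a stability analysis built on the resulting semidefinite certificate.

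First, I would produce a flag algebra certificate bounding $i(C_3^+)$ from above by $\tfrac{1}{8}(8-9\sqrt[3]{3}+3\sqrt[3]{9})$. Working in the flag algebra of tournaments, one chooses a finite family of types and flags on each type, and a positive semidefinite matrix per type whose quadratic expansion into tournament densities, combined with the density representation of $i(C_3^+,\cdot)$ in terms of, say, all $6$- or $7$-vertex tournaments, yields the required inequality. The matrix is obtained numerically by solving a semidefinite program and then rounded into an exact algebraic certificate whose entries live in $\mathbb{Q}(\sqrt[3]{3})$. A certificate of this form already underlies the treatment of $C_4$ in Theorem~\ref{TC4_Full} and is expected to apply here with only changes of data.

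Second, I would read off from the tight constraints of the SDP the forbidden and compulsory local densities in any asymptotically extremal sequence $(T_n)$. Let $T^*$ be a limit of such a sequence. The tight inequalities should force $T^*$ to have a two-block structure: there is a partition of the ground set into $H$ of measure $\alpha$ and $L$ of measure $1-\alpha$, such that every arc between the parts is directed from $H$ to $L$, the subtournament on $L$ is quasi-random, and the subtournament on $H$ is itself extremal for $C_3^+$. To extract the partition in the finite setting I would separate the vertices of $T_n$ by out-degree: vertices that look like members of $H$ have out-degree close to $(1-\alpha)n+\alpha^2 n/2$, while vertices that look like members of $L$ have out-degree close to $(1-\alpha)n/2$. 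Tightness of the SDP forces all but $o(n)$ vertices into one of the two classes and all but $o(n^2)$ arcs between the classes to be directed correctly. Quasi-randomness of $T_n[L_n]$ then follows because, after the partition is fixed, the number of copies of $C_3^+$ whose source lies in $H_n$ is to leading order a fixed multiple of the number of copies of $C_3$ in $L_n$, which by Proposition~\ref{C3} is maximized only in the quasi-random regime.

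Finally, the recursive equality $I(C_3^+,T_n[H_n])=I(C_3^+,|H_n|)$ and the exact statements of the theorem follow from local optimality. If $T_n[H_n]$ were not extremal, replacing it by an extremal tournament on the same vertex set would strictly increase $I(C_3^+,T_n)$, a contradiction. If a stray arc ran from some $v\in L_n$ to some $u\in H_n$, reversing it would change the $C_3^+$ count by a positive amount because $u$ and $v$ play structurally different roles; a careful accounting of how many $C_3^+$'s pass through that arc in either orientation shows the switch is strictly beneficial. The value of $\alpha$ is pinned down by differentiating the recurrence displayed in the excerpt for $i(C_3^+,T_n)$ with respect to $\alpha$ at the optimum, which produces the cubic whose relevant root is $\tfrac{1}{5}(2\sqrt[3]{9}-2-\sqrt[3]{3})$. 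The main obstacle is the stability step: turning the asymptotic flag algebra bound into statements sharp enough to conclude exact structural claims such as ``every arc between $H_n$ and $L_n$ is directed from $H_n$ to $L_n$'' requires combining the SDP slack bounds with local exchange arguments that quantify how any single violation of the structure, together with a quasi-random copy of the rest, produces a measurable loss in the $C_3^+$ count.
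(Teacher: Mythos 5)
Your first step rests on a false premise: you propose producing an \emph{exact} plain flag algebra certificate proving $i(C_3^+)\le\frac18(8-9\sqrt[3]{3}+3\sqrt[3]{9})$ and then reading off structure from the tight SDP constraints. No such certificate can exist, because the extremal object is an iterated blow-up and plain flag algebra bounds are provably non-sharp in that regime — a point the actual proof flags explicitly (``Notice that the upper bound is not sharp, which is common for extremal constructions involving iterations''). The paper instead uses a deliberately \emph{loose} numerical bound $i(C_3^+,n)<0.157500672$ and then extracts local structure not from SDP slack but from separate \emph{rooted}, vertex-colored flag algebra programs: one root $v$ colored by $N^\pm(v)$ to bound out-degrees (giving the support claim that degrees lie in $(0.416,0.44057)\cup(0.8849,1]$), and a two-rooted, four-colored program to compare $C_3^+$-counts under the two orientations of a single arc $xy$ (forcing all arcs to go $H_n\to L_n$). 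Your suggestion that such a certificate ``already underlies the treatment of $C_4$ in Theorem~\ref{TC4_Full}'' is also incorrect: that theorem has a short combinatorial proof from the linear identity $i(C_3)=\tfrac12 i(C_4)+\tfrac14 i(C_3^+)+\tfrac14 i(C_3^-)$, with no flag algebra content at all.

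Your argument for quasi-randomness of $T_n[L_n]$ is also incomplete. Maximizing the $C_3$-density (near-regularity, Proposition~\ref{C3}) does \emph{not} imply quasi-randomness: the carousel $C_n$ is regular with $i(C_3)\to\tfrac14$ yet is far from random. One also needs $i(C_3^+,T_n[L_n])\to\tfrac18$, and the paper proves a characterization (Claim~\ref{QR}) that these two densities together suffice. To obtain the second density the paper needs an additional flag algebra inequality, $3i(C_3^+,T)+2i(C_3,T)\le\tfrac78+o(1)$, plugged into a chain of inequalities that decomposes $i(C_3^+,T_n)$ over the $H_n/L_n$ partition and shows the chain must be tight, forcing both $i(C_3,T_n[L_n])=\tfrac14$ and $i(C_3^+,T_n[L_n])=\tfrac18$ simultaneously. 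Your proposal never supplies this second density and therefore cannot conclude quasi-randomness. The remaining pieces of your sketch — the local-replacement argument for $I(C_3^+,T_n[H_n])=I(C_3^+,|H_n|)$ and the calculus determining $\alpha$ — do match the paper's reasoning, but they are the easy parts; the genuine difficulty lies precisely in the two steps above that your proposal handles incorrectly.
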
 
While $C_3^+$ may not be the most interesting tournament to consider at first, we find this extremal construction fascinating. It combines quasi-random parts with iterated blow-ups, and is thus more complex than most known extremal constructions for other problems.

\section{Proof of Theorem~\ref{TC4_Full}}\label{sec:TC4}

\begin{proof}[Proof of Theorem~\ref{TC4_Full}]
We begin by observing the following identity for all tournaments $T$ on at least $4$ vertices: 
\begin{align}\label{eqTC}
i(C_3,T)=\tfrac12 i(C_4,T)+\tfrac14 i(C_3^+,T)+\tfrac14 i(C_3^-,T).
\end{align}
This follows from the fact that the probability to find a $C_3$ when picking three vertices at random is equal to the probability to first find $C_4$, $C_3^+$, or $C_3^-$ when picking four vertices, times the appropriate probability that removing one of these vertices leaves a $C_3$.

Multiplying both sides by $\binom{n}{4}$, we can express this relationship in terms of a direct count of induced $C_4$ for any tournament $T$:
\begin{align*}
I(C_3,T)\cdot\tfrac{n-3}{4}
	&= \tfrac{1}{2}I(C_4,T) + \tfrac{1}{4}(I(C_3^+,T)+I(C_3^-,T)),
\end{align*}implying that \begin{align*}
I(C_4,T) &= I(C_3,T)\cdot\tfrac{n-3}{2} - \tfrac{1}{2}(I(C_3^+,T) + I(C_3^-,T)). 
\end{align*} 

Let $T\in \C_n$. Then every vertex in $T$ has out-degree in $\{\frac{n-2}{2},\frac{n-1}{2},\frac{n}{2}\}$, so by Proposition~\ref{C3}, $I(C_3,T)$ is maximized. On the other hand, the out-neighborhoods and in-neighborhoods of all vertices in $T$ induce transitive tournaments, so $I(C_3^+,T)=I(C_3^-,T)=0$. This shows that $T$ maximizes $I(C_4,T)$. The ideas up to this point are very similar to the proofs in~\cite{MR3549509} and~\cite{bozyk2020inducibility}.

To extend their result to Theorem~\ref{TC4_Full}, it remains to show that no other tournament shares this property.
For this, let $T$ be any $\{C_3^+,C_3^-\}$-free, (near) regular tournament, and let $v_1 \in V(T)$ with $d^+(v_1)=k\in\{\frac{n-2}{2},\frac{n-1}{2},\frac{n}{2}\}$. As $T$ is $C_3^+$-free, the out-neighborhood of $v_1$ is $C_3$-free and therefore transitive, and we may relabel the out-neighbors in this induced order as $\{v_2,v_3,\ldots,v_{k+1}\}$. Similarly, the in-neighborhood is transitive, and we may relabel it in the induced order as $\{v_{k+2},\ldots,v_n\}$.

Now suppose, for the sake of contradiction, that $T\notin\C_n$, and thus there exists an arc $v_iv_j$ with $0<i-j<\frac{n}{2}$ or if $-n<i-j<-\frac{n}{2}$. Let us first assume that $0<i-j<\frac{n}{2}$. As $\{v_2,v_3,\ldots,v_{k+1}\}$ and $\{v_{k+2},\ldots,v_n\}$ are transitively ordered, we have $j\le k$ and $i\ge k+1$. As $v_j$ has out-degree at least $\frac{n-1}{2}$, $v_j$ has an out-neighbor $v_{i'}$ with $i'>i$, implying that $v_iv_{i'}\in E(T)$. But now $T[v_1,v_j,v_i,v_{i'}]\simeq C_3^+$, a contradiction.

Let us now assume that $-n<i-j<-\frac{n}{2}$, and so $i\le k$ and $j>k$. Similarly as before, there now exists a $j'$ with $k<j'<j$ and $v_{j'}v_i\in E$, which again implies that $T[v_1,v_i,v_{j'},v_j]\simeq C_3^+$, a contradiction proving the theorem.
\end{proof}

\section{Proof of Theorem~\ref{SR4}}\label{sec:SR4}

\begin{proof}[Proof of Theorem~\ref{SR4}]
We start with an upper bound for the inducibility of $C_3^+$ using standard flag algebra methods. Notice that the upper bound is not sharp, which is common for extremal constructions involving iterations. We will always assume that $n$ is large enough that we are allowed to suppress lower order terms in our computations.

\begin{claim}
$i(C_3^+,n)\in (0.157500667, 0.157500672)$.
\end{claim}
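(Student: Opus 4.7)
The lower bound $i(C_3^+,n) > 0.157500667$ is immediate from the computation that precedes the claim: the iterated construction $T_n$ achieves $i(C_3^+,T_n) = \tfrac{1}{8}(8 - 9\sqrt[3]{3} + 3\sqrt[3]{9}) + o(1) \approx 0.15750066705$, and for $n$ sufficiently large this value exceeds $0.157500667$. So all of the real work lies in establishing the upper bound.

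The plan for the upper bound is to invoke the semidefinite flag algebra method of Razborov~\cite{razborov} for the theory of tournaments. First I would fix an integer $N$ (we expect $N=7$ to suffice, though $N=8$ may be needed to hit the target precision), enumerate all tournaments on $N$ vertices, and, for each type $\sigma$ on $k$ vertices with $N-k$ even, enumerate all $\sigma$-flags on $(N+k)/2$ vertices. Then I would form the semidefinite program whose variables are a constant $c$ together with positive semidefinite matrices $Q_\sigma$ indexed by types, and whose constraint, after the standard unlabeling/averaging step, forces
\[
c \;-\; i(C_3^+,T) \;-\; \sum_\sigma \ao{Q_\sigma,\,\mathbf{f}_\sigma \mathbf{f}_\sigma^{\top}}_\sigma \;\ge\; 0
\]
for every $N$-vertex tournament $T$, where $\mathbf{f}_\sigma$ collects the $\sigma$-flags. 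Minimizing $c$ numerically with a standard solver (CSDP or SDPA) and then converting the floating-point output into a rational certificate that passes exact verification yields the rigorous inequality $i(C_3^+) \le c$, which translates into the claimed upper bound on $i(C_3^+,n)$ for sufficiently large $n$.

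The main obstacle is that the extremal construction is iterative, mixing a quasi-random block with a self-similar block, so no flag algebra bound at finite $N$ can be tight; the certificate will always overshoot the true value. Since the target window lies only about $5\times 10^{-9}$ above the construction value, a naive rounding of the numerical solution will overshoot $0.157500672$. To handle this I would identify the (approximate) zero eigenvectors of each $Q_\sigma$ coming from the flags supported on the construction, project the numerical matrices orthogonally to these directions before rationalizing, and then certify positive semidefiniteness of the rounded matrices by LDLT or exact Cholesky. The complete certificate (flag lists, rounded matrices, and the verification script) would then be posted at \oururl for independent checking.
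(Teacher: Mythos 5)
Your proposal matches the paper's approach: the lower bound comes from the iterated construction computed just before the claim, and the upper bound is obtained by a plain flag algebra semidefinite program with a rational certificate posted online (the paper states the bound as the rational number $\frac{22050094023517191892318610820045970412528560}{140000000000000000000000000000000000000000000}<0.157500672$). One small remark: since, as you note, the finite-$N$ flag bound is necessarily strictly above the true value here, the delicate zero-eigenvector projection step you describe is not actually required; ordinary rationalization of a sufficiently accurate numerical solution suffices, as the paper does.
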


\noindent
\begin{proof}
We know that $i(C_3^+,n)>0.157500667$ by our construction. Using standard plain flag algebra techniques, we find that
\[
i(C_3^+) \leq \frac{2205009402351719189231861082004597041252856}{14000000000000000000000000000000000000000000}  <  0.157500672.
\]
We computed with flags of size $8$, and the computation, including the subsequent claims, ran for about 6 hours on a standard desktop.
Certificates are too large to be presented here, and do not add much insight. They can be found at \oururl.
\end{proof}

In the next claim, a symmetrization argument gives that every vertex is in roughly the same number of $C_3^+$.
Recall that in the theorem statement, $(T_n)_{n=1}^\infty$  is a sequence of tournaments on $n$ vertices with $I(C_3^+,T_n)=I(C_3^+,n)$. 

\begin{claim}\label{sym}
Every vertex is in $i(C_3^+,T_n){n-1\choose 3}+O(n^2)$ many copies of $C_3^+$.
\end{claim}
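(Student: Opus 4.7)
The plan is the standard twin-replacement symmetrization argument. Let $d(v)$ denote the number of induced copies of $C_3^+$ in $T_n$ containing the vertex $v$. Since each copy is counted four times,
\[
\sum_{v\in V(T_n)} d(v)=4I(C_3^+,T_n)=4\,i(C_3^+,T_n)\binom{n}{4},
\]
so the average value of $d(v)$ is exactly $i(C_3^+,T_n)\binom{n-1}{3}$. Our goal reduces to showing $d(v)-d(u)=O(n^2)$ for every pair $u,v$.

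Suppose toward a contradiction that $d(v)-d(u)$ is much larger than $n^2$ for some pair $u,v$. Define a new tournament $T'$ on the same vertex set by ``cloning $v$ onto $u$'': for each $w\in V(T_n)\setminus\{u,v\}$, set the arc between $u$ and $w$ in $T'$ to have the same orientation as the arc between $v$ and $w$ in $T_n$, and orient the arc between $u$ and $v$ arbitrarily. All other arcs are unchanged. Partition the copies of $C_3^+$ in either tournament according to their intersection with $\{u,v\}$: copies avoiding $\{u,v\}$ contribute the same count $N_\emptyset$ to $T_n$ and $T'$; copies containing $v$ but not $u$ are determined entirely by $v$'s neighborhood in $V\setminus\{u,v\}$, so they contribute $N_v=d(v)-N_{uv}$ to both tournaments, where $N_{uv}$ is the number of copies of $C_3^+$ in $T_n$ containing both $u$ and $v$. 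By the twin construction, copies of $C_3^+$ in $T'$ containing $u$ but not $v$ are in bijection with copies of $C_3^+$ in $T_n$ containing $v$ but not $u$, again contributing $N_v$.

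The key observation is that no induced copy of $C_3^+$ in $T'$ contains both $u$ and $v$: a quick case check on $C_3^+$ shows that no two of its four vertices have identical out-neighborhoods in the remaining two vertices, whereas in $T'$ the vertices $u$ and $v$ are, by construction, twins with respect to every pair in $V\setminus\{u,v\}$. Combining,
\[
I(C_3^+,T')-I(C_3^+,T_n)=(N_\emptyset+2N_v)-(N_\emptyset+N_u+N_v+N_{uv})=d(v)-d(u)-N_{uv}.
\]
Since $N_{uv}\le\binom{n-2}{2}=O(n^2)$, if $d(v)-d(u)$ exceeds $C n^2$ for a sufficiently large constant $C$, then $I(C_3^+,T')>I(C_3^+,T_n)$, contradicting the extremality of $T_n$. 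Hence $d(v)-d(u)=O(n^2)$ for every pair of vertices, and the claim follows by comparing each $d(v)$ to the average.

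The only delicate step is the twin observation ruling out $C_3^+$ copies on $\{u,v,a,b\}$; it is a finite check but essential, since without it the gain from symmetrization would be absorbed by the $N_{uv}$ term. Everything else is bookkeeping, and we do not need the flag algebra bound here — only the fact that $T_n$ is extremal.
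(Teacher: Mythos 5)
Your proof is correct and follows essentially the same symmetrization argument as the paper: replace the vertex in the fewest copies by a twin of the vertex in the most copies, observe that the gain is the difference in their copy counts minus the $O(n^2)$ copies containing both, and invoke extremality. The only difference is that you spell out the finite check that twins cannot both lie in an induced $C_3^+$, which the paper leaves implicit; this is a nice bit of rigor but not a different approach (and in fact even without it the gain is at least $d(v)-d(u)-N_{uv}$, which already suffices).
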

\begin{proof}
By definition, the average number of copies a vertex is in is $i(C_3^+,T_n){n-1\choose 3}$.
Let $v$ be a vertex which is in the fewest copies $C_3^+(v)$, and let $w$ be a vertex which is in the most copies $C_3^+(w)$. Let $C_3^+(vw)$ be the number of copies containing both $v$ and $w$.
If we delete $v$, and add a copy of $w$, we gain
\[
C_3^+(w)-C_3^+(v)-C_3^+(vw)
\]
copies of $C_3^+$. As $T_n$ is extremal, this quantity must be non-positive. Observing that $C_3^+(vw)=O(n^2)$ shows the claim. 
\end{proof}

The traditional way to extract structure from flag algebra computations 
 is to look for subgraphs for which the computations tell you that they have zero ($+o(1)$) density in every extremal construction. But this only works if the bounds from the computation are sharp. If the computations do not give sharp bounds like in our case, another approach is to do the opposite, and to compute bounds on subgraphs which occur with high density to find a general structure of the extremal example, and then use stability methods to establish the finer structure. Neither of these approaches has much promise in this problem without new ideas. As a large part of the conjectured extremal tournament is quasi-random, all subgraphs appear with a frequency similar to a random tournament, and structural differences to a random tournament are difficult to extract.

Inspired by the conjectured extremal tournament, we are looking for other features. A first observation is that the degree distribution is concentrated around a few discrete values. All vertices in $L_n$ have about the same fairly small out-degree, and all vertices in $H_n$ have very large out-degree, where the corresponding discrete values are a sequence converging to $1$ when normalized. A second observation is that all arcs between $L_n$ and $H_n$ are directed from $H_n$ to $L_n$. We use flag algebra computations to prove that these two observations are true in every extremal tournament, and from this we are able to prove the theorem.

Let $deg(x)$ be the normalized out-degree distribution function for an extremal tournament $T_n$: 
\[
deg(x) = \tfrac{1}{n}\abs{\{v \in V(T) : d^+(v)=xn\}}.
\]
For the remainder of the proof, the word ``normalized'' will be suppressed for simplicity. 
To make our computations more intuitive to follow, we will often denote the quantity $i(H,T_n)$ by a picture of the graph $H$, so we might write 
\[
\begin{tikzpicture}[baseline=1.3ex,scale=0.65]
			\node[shape=circle,draw=black,fill=black,inner sep=2pt] (1) at (0,0) {};
			\node[shape=circle,draw=black,fill=black,inner sep=2pt] (2) at (1,0) {};
			\node[shape=circle,draw=black,fill=black,inner sep=2pt] (3) at (1,1) {};
			\node[shape=circle,draw=black,fill=black,inner sep=2pt] (4) at (0,1) {};
			\draw[dedge] (1) -> (2);
			\draw[dedge] (2) -> (3);
			\draw[dedge] (3) -> (4);
			\draw[dedge] (4) -> (2);
			\draw[dedge] (1) -> (3);
			\draw[dedge] (1) -> (4);
		\end{tikzpicture}
=
i(C_3^+,T_n).
\]
We now show that $T_n$ has a degree distribution similar to the conjectured example, i.e. almost all vertices have degrees in small intervals around the degrees appearing in the construction. While we might be able to separate the high degree vertices into more degree bands with more effort, this will not be needed later, so we collect them all in one interval. We state these bounds up to a precision useful later in the proof.

\begin{claim}\label{support}
For all $x \in [0,0.416] \cup [0.44057,0.8849]$, $deg(x) = 0$. In other words, all but $o(n)$ vertices have degrees either in $(0.416,0.44057)$ or in $(0.8849,1]$.
\end{claim}

\noindent
\begin{proof}
We prove this claim by showing three bounds.
First we investigate vertices $v$ with $d^+(v) \leq 0.85n$ and
obtain lower and upper bounds on $d^+(v)$, namely that $d^+(v) \in (0.416,0.44057)$. For the third bound, we switch to vertices $v$ with $d^+(v) \geq 0.85n$ and show that actually $d^+(v) > 0.8849n$.

 We begin with the lower bound of the support of $deg(x)$. Fix some vertex $v \in V(T_n)$ and color all vertices in $N^+(v)$ black and color $N^-(v)$ white. We will use flag algebras to bound the proportion of black vertices in $T_n-v$, and to this end we begin setting up a program that can be bounded by the plain flag algebra method. Since $i(C_3^+,T_n-v) >0.157500667$, we know the sum of the densities of all 2-colorings of $C_3^+$ is at least 0.157500667. 
We reduce our search space with the constraint that
$\begin{tikzpicture}[scale=0.65]
	\node[shape=circle,draw=black,fill=black,inner sep=2pt] (1) at (0,0){};
\end{tikzpicture} \leq 0.85$, interpreted as $v$ having normalized out-degree at most 0.85.

Ignoring lower order terms, we also know that every vertex is in the same number of $C_3^+$ (see Claim \ref{sym}), so we can add an additional constraint to reflect this fact. If $v$ plays the role of the source vertex in the $C_3^+$, then the remaining three vertices are all in $N^+(v)$ and induce a $C_3$. Otherwise, $v$ plays the role of one of the vertices in the $C_3$, and the other three vertices induce a transitive triangle where the source and sink are in $N^-(v)$ and the last vertex is in $N^+(x)$. Our coloring scheme thus allows us to include the final bound in the following program:

\begin{quote} 
	Objective: \begin{quote}
		minimize $\begin{tikzpicture}[scale=0.65]
	\node[shape=circle,draw=black,fill=black,inner sep=2pt] (1) at (0,0){};
\end{tikzpicture}$
		\end{quote}
	Constraints: \begin{quote}
		$\begin{tikzpicture}[scale=0.65]
	\node[shape=circle,draw=black,fill=black,inner sep=2pt] (1) at (0,0){};
\end{tikzpicture} \leq 0.85$ \\
		$0.157500667 \leq 
		\begin{tikzpicture}[baseline=1.3ex,scale=0.65]
			\node[shape=circle,draw=black,fill=black,inner sep=2pt] (1) at (0,0){};
			\node[shape=circle,draw=black,fill=black,inner sep=2pt] (2) at (0,1){};
			\node[shape=circle,draw=black,fill=black,inner sep=2pt] (3) at (1,1){};
			\node[shape=circle,draw=black,fill=black,inner sep=2pt] (4) at (1,0){};
			\draw (1) -> (2) -> (3) -> (4) -> (2) (1) -> (3) (1) -> (4);
			\draw[dedge] (1) -> (2);
			\draw[dedge] (2) -> (3);
			\draw[dedge] (3) -> (4);
			\draw[dedge] (4) -> (2);
			\draw[dedge] (1) -> (3);
			\draw[dedge] (1) -> (4);
		\end{tikzpicture}
		+ \begin{tikzpicture}[baseline=1.3ex,scale=0.65]
			\node[shape=circle,draw=black,fill=black,inner sep=2pt] (1) at (0,0){};
			\node[shape=circle,draw=black,fill=black,inner sep=2pt] (2) at (0,1){};
			\node[shape=circle,draw=black,fill=black,inner sep=2pt] (3) at (1,1){};
			\node[shape=circle,draw=black,fill=white,inner sep=2pt] (4) at (1,0){};
			\draw (1) -> (2) -> (3) -> (4) -> (2) (1) -> (3) (1) -> (4);	
			\draw[dedge] (1) -> (2);
			\draw[dedge] (2) -> (3);
			\draw[dedge] (3) -> (4);
			\draw[dedge] (4) -> (2);
			\draw[dedge] (1) -> (3);
			\draw[dedge] (1) -> (4);	
		\end{tikzpicture}
		+ \begin{tikzpicture}[baseline=1.3ex,scale=0.65]
			\node[shape=circle,draw=black,fill=black,inner sep=2pt] (1) at (0,0){};
			\node[shape=circle,draw=black,fill=black,inner sep=2pt] (2) at (0,1){};
			\node[shape=circle,draw=black,fill=white,inner sep=2pt] (3) at (1,1){};
			\node[shape=circle,draw=black,fill=white,inner sep=2pt] (4) at (1,0){};
			\draw (1) -> (2) -> (3) -> (4) -> (2) (1) -> (3) (1) -> (4);		
			\draw[dedge] (1) -> (2);
			\draw[dedge] (2) -> (3);
			\draw[dedge] (3) -> (4);
			\draw[dedge] (4) -> (2);
			\draw[dedge] (1) -> (3);
			\draw[dedge] (1) -> (4);
		\end{tikzpicture}
		+ \begin{tikzpicture}[baseline=1.3ex,scale=0.65]
			\node[shape=circle,draw=black,fill=black,inner sep=2pt] (1) at (0,0){};
			\node[shape=circle,draw=black,fill=white,inner sep=2pt] (2) at (0,1){};
			\node[shape=circle,draw=black,fill=white,inner sep=2pt] (3) at (1,1){};
			\node[shape=circle,draw=black,fill=white,inner sep=2pt] (4) at (1,0){};
			\draw (1) -> (2) -> (3) -> (4) -> (2) (1) -> (3) (1) -> (4);	
			\draw[dedge] (1) -> (2);
			\draw[dedge] (2) -> (3);
			\draw[dedge] (3) -> (4);
			\draw[dedge] (4) -> (2);
			\draw[dedge] (1) -> (3);
			\draw[dedge] (1) -> (4);	
		\end{tikzpicture}
		+ \begin{tikzpicture}[baseline=1.3ex,scale=0.65]
			\node[shape=circle,draw=black,fill=white,inner sep=2pt] (1) at (0,0){};
			\node[shape=circle,draw=black,fill=black,inner sep=2pt] (2) at (0,1){};
			\node[shape=circle,draw=black,fill=black,inner sep=2pt] (3) at (1,1){};
			\node[shape=circle,draw=black,fill=black,inner sep=2pt] (4) at (1,0){};
			\draw (1) -> (2) -> (3) -> (4) -> (2) (1) -> (3) (1) -> (4);		
			\draw[dedge] (1) -> (2);
			\draw[dedge] (2) -> (3);
			\draw[dedge] (3) -> (4);
			\draw[dedge] (4) -> (2);
			\draw[dedge] (1) -> (3);
			\draw[dedge] (1) -> (4);
		\end{tikzpicture}
		+ \begin{tikzpicture}[baseline=1.3ex,scale=0.65]
			\node[shape=circle,draw=black,fill=white,inner sep=2pt] (1) at (0,0){};
			\node[shape=circle,draw=black,fill=black,inner sep=2pt] (2) at (0,1){};
			\node[shape=circle,draw=black,fill=black,inner sep=2pt] (3) at (1,1){};
			\node[shape=circle,draw=black,fill=white,inner sep=2pt] (4) at (1,0){};
			\draw (1) -> (2) -> (3) -> (4) -> (2) (1) -> (3) (1) -> (4);		
			\draw[dedge] (1) -> (2);
			\draw[dedge] (2) -> (3);
			\draw[dedge] (3) -> (4);
			\draw[dedge] (4) -> (2);
			\draw[dedge] (1) -> (3);
			\draw[dedge] (1) -> (4);
		\end{tikzpicture}
		+ \begin{tikzpicture}[baseline=1.3ex,scale=0.65]
			\node[shape=circle,draw=black,fill=white,inner sep=2pt] (1) at (0,0){};
			\node[shape=circle,draw=black,fill=black,inner sep=2pt] (2) at (0,1){};
			\node[shape=circle,draw=black,fill=white,inner sep=2pt] (3) at (1,1){};
			\node[shape=circle,draw=black,fill=white,inner sep=2pt] (4) at (1,0){};
			\draw (1) -> (2) -> (3) -> (4) -> (2) (1) -> (3) (1) -> (4);		
			\draw[dedge] (1) -> (2);
			\draw[dedge] (2) -> (3);
			\draw[dedge] (3) -> (4);
			\draw[dedge] (4) -> (2);
			\draw[dedge] (1) -> (3);
			\draw[dedge] (1) -> (4);
		\end{tikzpicture}
		+ \begin{tikzpicture}[baseline=1.3ex,scale=0.65]
			\node[shape=circle,draw=black,fill=white,inner sep=2pt] (1) at (0,0){};
			\node[shape=circle,draw=black,fill=white,inner sep=2pt] (2) at (0,1){};
			\node[shape=circle,draw=black,fill=white,inner sep=2pt] (3) at (1,1){};
			\node[shape=circle,draw=black,fill=white,inner sep=2pt] (4) at (1,0){};
			\draw (1) -> (2) -> (3) -> (4) -> (2) (1) -> (3) (1) -> (4);		
			\draw[dedge] (1) -> (2);
			\draw[dedge] (2) -> (3);
			\draw[dedge] (3) -> (4);
			\draw[dedge] (4) -> (2);
			\draw[dedge] (1) -> (3);
			\draw[dedge] (1) -> (4);
		\end{tikzpicture}$ \\
		$0.157500667 \leq \begin{tikzpicture}[baseline=1.3ex,scale=0.65]
			\node[shape=circle,draw=black,fill=black,inner sep=2pt] (1) at (0,0){};
			\node[shape=circle,draw=black,fill=black,inner sep=2pt] (2) at (0.5,1){};
			\node[shape=circle,draw=black,fill=black,inner sep=2pt] (3) at (1,0){};
			\draw[dedge] (1) -> (2);
			\draw[dedge] (2) -> (3);
			\draw[dedge] (3) -> (1);
		\end{tikzpicture}
		+ \begin{tikzpicture}[baseline=1.3ex,scale=0.65]
			\node[shape=circle,draw=black,fill=white,inner sep=2pt] (1) at (0,0){};
			\node[shape=circle,draw=black,fill=black,inner sep=2pt] (2) at (0.5,1) {};
			\node[shape=circle,draw=black,fill=white,inner sep=2pt] (3) at (1,0) {};
			\draw[dedge] (1) -> (2);
			\draw[dedge] (2) -> (3);
			\draw[dedge] (1) -> (3);
		\end{tikzpicture}
		$
		\end{quote}
\end{quote}

From this program, we find that $\begin{tikzpicture}[scale=0.65]
	\node[shape=circle,draw=black,fill=black,inner sep=2pt] (1) at (0,0){};
\end{tikzpicture} > 0.416$. More precisely,
\[
\begin{tikzpicture}[scale=0.65]
	\node[shape=circle,fill=black,inner sep=2pt] (1) at (0,0){};
\end{tikzpicture} \geq \frac{3745132053776853970635292684882343419187823353945331}{ 9000000000000000000000000000000000000000000000000000}  >   0.416.
\]

Similarly, we obtain $\begin{tikzpicture}[scale=0.65]
	\node[shape=circle,draw=black,fill=black,inner sep=2pt] (1) at (0,0){};
\end{tikzpicture} < 0.44057$, or more precisely that
\[
\begin{tikzpicture}[scale=0.65]
	\node[shape=circle,fill=black,inner sep=2pt] (1) at (0,0){};
\end{tikzpicture} \leq \frac{3965045776267019146300369491791058488457641019475359}{9000000000000000000000000000000000000000000000000000}  < 0.44057,
\] 
from the following program:

\begin{quote} 
	Objective: \begin{quote}
		maximize $\begin{tikzpicture}[scale=0.65]
	\node[shape=circle,draw=black,fill=black,inner sep=2pt] (1) at (0,0){};
\end{tikzpicture}$
		\end{quote}
	Constraints: \begin{quote}
		$\begin{tikzpicture}[scale=0.65]
	\node[shape=circle,draw=black,fill=black,inner sep=2pt] (1) at (0,0){};
\end{tikzpicture} \leq 0.85$ \\
		$0.157500667 \leq 
		\begin{tikzpicture}[baseline=1.3ex,scale=0.65]
			\node[shape=circle,draw=black,fill=black,inner sep=2pt] (1) at (0,0){};
			\node[shape=circle,draw=black,fill=black,inner sep=2pt] (2) at (0,1){};
			\node[shape=circle,draw=black,fill=black,inner sep=2pt] (3) at (1,1){};
			\node[shape=circle,draw=black,fill=black,inner sep=2pt] (4) at (1,0){};
			\draw (1) -> (2) -> (3) -> (4) -> (2) (1) -> (3) (1) -> (4);
			\draw[dedge] (1) -> (2);
			\draw[dedge] (2) -> (3);
			\draw[dedge] (3) -> (4);
			\draw[dedge] (4) -> (2);
			\draw[dedge] (1) -> (3);
			\draw[dedge] (1) -> (4);
		\end{tikzpicture}
		+ \begin{tikzpicture}[baseline=1.3ex,scale=0.65]
			\node[shape=circle,draw=black,fill=black,inner sep=2pt] (1) at (0,0){};
			\node[shape=circle,draw=black,fill=black,inner sep=2pt] (2) at (0,1){};
			\node[shape=circle,draw=black,fill=black,inner sep=2pt] (3) at (1,1){};
			\node[shape=circle,draw=black,fill=white,inner sep=2pt] (4) at (1,0){};
			\draw (1) -> (2) -> (3) -> (4) -> (2) (1) -> (3) (1) -> (4);	
			\draw[dedge] (1) -> (2);
			\draw[dedge] (2) -> (3);
			\draw[dedge] (3) -> (4);
			\draw[dedge] (4) -> (2);
			\draw[dedge] (1) -> (3);
			\draw[dedge] (1) -> (4);	
		\end{tikzpicture}
		+ \begin{tikzpicture}[baseline=1.3ex,scale=0.65]
			\node[shape=circle,draw=black,fill=black,inner sep=2pt] (1) at (0,0){};
			\node[shape=circle,draw=black,fill=black,inner sep=2pt] (2) at (0,1){};
			\node[shape=circle,draw=black,fill=white,inner sep=2pt] (3) at (1,1){};
			\node[shape=circle,draw=black,fill=white,inner sep=2pt] (4) at (1,0){};
			\draw (1) -> (2) -> (3) -> (4) -> (2) (1) -> (3) (1) -> (4);		
			\draw[dedge] (1) -> (2);
			\draw[dedge] (2) -> (3);
			\draw[dedge] (3) -> (4);
			\draw[dedge] (4) -> (2);
			\draw[dedge] (1) -> (3);
			\draw[dedge] (1) -> (4);
		\end{tikzpicture}
		+ \begin{tikzpicture}[baseline=1.3ex,scale=0.65]
			\node[shape=circle,draw=black,fill=black,inner sep=2pt] (1) at (0,0){};
			\node[shape=circle,draw=black,fill=white,inner sep=2pt] (2) at (0,1){};
			\node[shape=circle,draw=black,fill=white,inner sep=2pt] (3) at (1,1){};
			\node[shape=circle,draw=black,fill=white,inner sep=2pt] (4) at (1,0){};
			\draw (1) -> (2) -> (3) -> (4) -> (2) (1) -> (3) (1) -> (4);	
			\draw[dedge] (1) -> (2);
			\draw[dedge] (2) -> (3);
			\draw[dedge] (3) -> (4);
			\draw[dedge] (4) -> (2);
			\draw[dedge] (1) -> (3);
			\draw[dedge] (1) -> (4);	
		\end{tikzpicture}
		+ \begin{tikzpicture}[baseline=1.3ex,scale=0.65]
			\node[shape=circle,draw=black,fill=white,inner sep=2pt] (1) at (0,0){};
			\node[shape=circle,draw=black,fill=black,inner sep=2pt] (2) at (0,1){};
			\node[shape=circle,draw=black,fill=black,inner sep=2pt] (3) at (1,1){};
			\node[shape=circle,draw=black,fill=black,inner sep=2pt] (4) at (1,0){};
			\draw (1) -> (2) -> (3) -> (4) -> (2) (1) -> (3) (1) -> (4);		
			\draw[dedge] (1) -> (2);
			\draw[dedge] (2) -> (3);
			\draw[dedge] (3) -> (4);
			\draw[dedge] (4) -> (2);
			\draw[dedge] (1) -> (3);
			\draw[dedge] (1) -> (4);
		\end{tikzpicture}
		+ \begin{tikzpicture}[baseline=1.3ex,scale=0.65]
			\node[shape=circle,draw=black,fill=white,inner sep=2pt] (1) at (0,0){};
			\node[shape=circle,draw=black,fill=black,inner sep=2pt] (2) at (0,1){};
			\node[shape=circle,draw=black,fill=black,inner sep=2pt] (3) at (1,1){};
			\node[shape=circle,draw=black,fill=white,inner sep=2pt] (4) at (1,0){};
			\draw (1) -> (2) -> (3) -> (4) -> (2) (1) -> (3) (1) -> (4);		
			\draw[dedge] (1) -> (2);
			\draw[dedge] (2) -> (3);
			\draw[dedge] (3) -> (4);
			\draw[dedge] (4) -> (2);
			\draw[dedge] (1) -> (3);
			\draw[dedge] (1) -> (4);
		\end{tikzpicture}
		+ \begin{tikzpicture}[baseline=1.3ex,scale=0.65]
			\node[shape=circle,draw=black,fill=white,inner sep=2pt] (1) at (0,0){};
			\node[shape=circle,draw=black,fill=black,inner sep=2pt] (2) at (0,1){};
			\node[shape=circle,draw=black,fill=white,inner sep=2pt] (3) at (1,1){};
			\node[shape=circle,draw=black,fill=white,inner sep=2pt] (4) at (1,0){};
			\draw (1) -> (2) -> (3) -> (4) -> (2) (1) -> (3) (1) -> (4);		
			\draw[dedge] (1) -> (2);
			\draw[dedge] (2) -> (3);
			\draw[dedge] (3) -> (4);
			\draw[dedge] (4) -> (2);
			\draw[dedge] (1) -> (3);
			\draw[dedge] (1) -> (4);
		\end{tikzpicture}
		+ \begin{tikzpicture}[baseline=1.3ex,scale=0.65]
			\node[shape=circle,draw=black,fill=white,inner sep=2pt] (1) at (0,0){};
			\node[shape=circle,draw=black,fill=white,inner sep=2pt] (2) at (0,1){};
			\node[shape=circle,draw=black,fill=white,inner sep=2pt] (3) at (1,1){};
			\node[shape=circle,draw=black,fill=white,inner sep=2pt] (4) at (1,0){};
			\draw (1) -> (2) -> (3) -> (4) -> (2) (1) -> (3) (1) -> (4);		
			\draw[dedge] (1) -> (2);
			\draw[dedge] (2) -> (3);
			\draw[dedge] (3) -> (4);
			\draw[dedge] (4) -> (2);
			\draw[dedge] (1) -> (3);
			\draw[dedge] (1) -> (4);
		\end{tikzpicture}$ \\
		$0.157500667 \leq \begin{tikzpicture}[baseline=1.3ex,scale=0.65]
			\node[shape=circle,draw=black,fill=black,inner sep=2pt] (1) at (0,0){};
			\node[shape=circle,draw=black,fill=black,inner sep=2pt] (2) at (0.5,1){};
			\node[shape=circle,draw=black,fill=black,inner sep=2pt] (3) at (1,0){};
			\draw[dedge] (1) -> (2);
			\draw[dedge] (2) -> (3);
			\draw[dedge] (3) -> (1);
		\end{tikzpicture}
		+ \begin{tikzpicture}[baseline=1.3ex,scale=0.65]
			\node[shape=circle,draw=black,fill=white,inner sep=2pt] (1) at (0,0){};
			\node[shape=circle,draw=black,fill=black,inner sep=2pt] (2) at (0.5,1) {};
			\node[shape=circle,draw=black,fill=white,inner sep=2pt] (3) at (1,0) {};
			\draw[dedge] (1) -> (2);
			\draw[dedge] (2) -> (3);
			\draw[dedge] (1) -> (3);
		\end{tikzpicture}
		$
		\end{quote}
\end{quote}

These two results imply that for large enough $n$, no vertices have normalized out-degree in $[0,0.416] \cup [0.44057,0.85]$.
We extend this result with the following program restricting the degree of large out-degree vertices: 

\begin{quote} 
	Objective: \begin{quote}
		minimize $\begin{tikzpicture}[scale=0.65]
	\node[shape=circle,draw=black,fill=black,inner sep=2pt] (1) at (0,0){};
\end{tikzpicture}$
		\end{quote}
	Constraints: \begin{quote}
		$\begin{tikzpicture}[scale=0.65]
	\node[shape=circle,draw=black,fill=black,inner sep=2pt] (1) at (0,0){};
\end{tikzpicture} \geq 0.85$ \\
		$0.157500667 \leq 
		\begin{tikzpicture}[baseline=1.3ex,scale=0.65]
			\node[shape=circle,draw=black,fill=black,inner sep=2pt] (1) at (0,0){};
			\node[shape=circle,draw=black,fill=black,inner sep=2pt] (2) at (0,1){};
			\node[shape=circle,draw=black,fill=black,inner sep=2pt] (3) at (1,1){};
			\node[shape=circle,draw=black,fill=black,inner sep=2pt] (4) at (1,0){};
			\draw (1) -> (2) -> (3) -> (4) -> (2) (1) -> (3) (1) -> (4);
			\draw[dedge] (1) -> (2);
			\draw[dedge] (2) -> (3);
			\draw[dedge] (3) -> (4);
			\draw[dedge] (4) -> (2);
			\draw[dedge] (1) -> (3);
			\draw[dedge] (1) -> (4);
		\end{tikzpicture}
		+ \begin{tikzpicture}[baseline=1.3ex,scale=0.65]
			\node[shape=circle,draw=black,fill=black,inner sep=2pt] (1) at (0,0){};
			\node[shape=circle,draw=black,fill=black,inner sep=2pt] (2) at (0,1){};
			\node[shape=circle,draw=black,fill=black,inner sep=2pt] (3) at (1,1){};
			\node[shape=circle,draw=black,fill=white,inner sep=2pt] (4) at (1,0){};
			\draw (1) -> (2) -> (3) -> (4) -> (2) (1) -> (3) (1) -> (4);	
			\draw[dedge] (1) -> (2);
			\draw[dedge] (2) -> (3);
			\draw[dedge] (3) -> (4);
			\draw[dedge] (4) -> (2);
			\draw[dedge] (1) -> (3);
			\draw[dedge] (1) -> (4);	
		\end{tikzpicture}
		+ \begin{tikzpicture}[baseline=1.3ex,scale=0.65]
			\node[shape=circle,draw=black,fill=black,inner sep=2pt] (1) at (0,0){};
			\node[shape=circle,draw=black,fill=black,inner sep=2pt] (2) at (0,1){};
			\node[shape=circle,draw=black,fill=white,inner sep=2pt] (3) at (1,1){};
			\node[shape=circle,draw=black,fill=white,inner sep=2pt] (4) at (1,0){};
			\draw (1) -> (2) -> (3) -> (4) -> (2) (1) -> (3) (1) -> (4);		
			\draw[dedge] (1) -> (2);
			\draw[dedge] (2) -> (3);
			\draw[dedge] (3) -> (4);
			\draw[dedge] (4) -> (2);
			\draw[dedge] (1) -> (3);
			\draw[dedge] (1) -> (4);
		\end{tikzpicture}
		+ \begin{tikzpicture}[baseline=1.3ex,scale=0.65]
			\node[shape=circle,draw=black,fill=black,inner sep=2pt] (1) at (0,0){};
			\node[shape=circle,draw=black,fill=white,inner sep=2pt] (2) at (0,1){};
			\node[shape=circle,draw=black,fill=white,inner sep=2pt] (3) at (1,1){};
			\node[shape=circle,draw=black,fill=white,inner sep=2pt] (4) at (1,0){};
			\draw (1) -> (2) -> (3) -> (4) -> (2) (1) -> (3) (1) -> (4);	
			\draw[dedge] (1) -> (2);
			\draw[dedge] (2) -> (3);
			\draw[dedge] (3) -> (4);
			\draw[dedge] (4) -> (2);
			\draw[dedge] (1) -> (3);
			\draw[dedge] (1) -> (4);	
		\end{tikzpicture}
		+ \begin{tikzpicture}[baseline=1.3ex,scale=0.65]
			\node[shape=circle,draw=black,fill=white,inner sep=2pt] (1) at (0,0){};
			\node[shape=circle,draw=black,fill=black,inner sep=2pt] (2) at (0,1){};
			\node[shape=circle,draw=black,fill=black,inner sep=2pt] (3) at (1,1){};
			\node[shape=circle,draw=black,fill=black,inner sep=2pt] (4) at (1,0){};
			\draw (1) -> (2) -> (3) -> (4) -> (2) (1) -> (3) (1) -> (4);		
			\draw[dedge] (1) -> (2);
			\draw[dedge] (2) -> (3);
			\draw[dedge] (3) -> (4);
			\draw[dedge] (4) -> (2);
			\draw[dedge] (1) -> (3);
			\draw[dedge] (1) -> (4);
		\end{tikzpicture}
		+ \begin{tikzpicture}[baseline=1.3ex,scale=0.65]
			\node[shape=circle,draw=black,fill=white,inner sep=2pt] (1) at (0,0){};
			\node[shape=circle,draw=black,fill=black,inner sep=2pt] (2) at (0,1){};
			\node[shape=circle,draw=black,fill=black,inner sep=2pt] (3) at (1,1){};
			\node[shape=circle,draw=black,fill=white,inner sep=2pt] (4) at (1,0){};
			\draw (1) -> (2) -> (3) -> (4) -> (2) (1) -> (3) (1) -> (4);		
			\draw[dedge] (1) -> (2);
			\draw[dedge] (2) -> (3);
			\draw[dedge] (3) -> (4);
			\draw[dedge] (4) -> (2);
			\draw[dedge] (1) -> (3);
			\draw[dedge] (1) -> (4);
		\end{tikzpicture}
		+ \begin{tikzpicture}[baseline=1.3ex,scale=0.65]
			\node[shape=circle,draw=black,fill=white,inner sep=2pt] (1) at (0,0){};
			\node[shape=circle,draw=black,fill=black,inner sep=2pt] (2) at (0,1){};
			\node[shape=circle,draw=black,fill=white,inner sep=2pt] (3) at (1,1){};
			\node[shape=circle,draw=black,fill=white,inner sep=2pt] (4) at (1,0){};
			\draw (1) -> (2) -> (3) -> (4) -> (2) (1) -> (3) (1) -> (4);		
			\draw[dedge] (1) -> (2);
			\draw[dedge] (2) -> (3);
			\draw[dedge] (3) -> (4);
			\draw[dedge] (4) -> (2);
			\draw[dedge] (1) -> (3);
			\draw[dedge] (1) -> (4);
		\end{tikzpicture}
		+ \begin{tikzpicture}[baseline=1.3ex,scale=0.65]
			\node[shape=circle,draw=black,fill=white,inner sep=2pt] (1) at (0,0){};
			\node[shape=circle,draw=black,fill=white,inner sep=2pt] (2) at (0,1){};
			\node[shape=circle,draw=black,fill=white,inner sep=2pt] (3) at (1,1){};
			\node[shape=circle,draw=black,fill=white,inner sep=2pt] (4) at (1,0){};
			\draw (1) -> (2) -> (3) -> (4) -> (2) (1) -> (3) (1) -> (4);		
			\draw[dedge] (1) -> (2);
			\draw[dedge] (2) -> (3);
			\draw[dedge] (3) -> (4);
			\draw[dedge] (4) -> (2);
			\draw[dedge] (1) -> (3);
			\draw[dedge] (1) -> (4);
		\end{tikzpicture}$ \\
		$0.157500667 \leq \begin{tikzpicture}[baseline=1.3ex,scale=0.65]
			\node[shape=circle,draw=black,fill=black,inner sep=2pt] (1) at (0,0){};
			\node[shape=circle,draw=black,fill=black,inner sep=2pt] (2) at (0.5,1){};
			\node[shape=circle,draw=black,fill=black,inner sep=2pt] (3) at (1,0){};
			\draw[dedge] (1) -> (2);
			\draw[dedge] (2) -> (3);
			\draw[dedge] (3) -> (1);
		\end{tikzpicture}
		+ \begin{tikzpicture}[baseline=1.3ex,scale=0.65]
			\node[shape=circle,draw=black,fill=white,inner sep=2pt] (1) at (0.5,1){};
			\node[shape=circle,draw=black,fill=black,inner sep=2pt] (2) at (1,0) {};
			\node[shape=circle,draw=black,fill=white,inner sep=2pt] (3) at (0,0) {};
			\draw[dedge] (1) -> (2);
			\draw[dedge] (2) -> (3);
			\draw[dedge] (1) -> (3);
		\end{tikzpicture}
		$
		\end{quote}
\end{quote}

This program outputs the lower bound $\begin{tikzpicture}[scale=0.65]
	\node[shape=circle,fill=black,inner sep=2pt] (1) at (0,0){};
\end{tikzpicture} > 0.8849$, completing the proof of this claim.
More precisely, it gives
\[
\begin{tikzpicture}[scale=0.65]
	\node[shape=circle,draw=black,fill=black,inner sep=2pt] (1) at (0,0){};
\end{tikzpicture} \geq \frac{7964349279220411495203511946962186030422903216282624}{ 9000000000000000000000000000000000000000000000000000} >  0.88492769769. 
\]
All certificates can be found at 
\oururl.
\end{proof}

Let $H_n$ be the set of vertices in $T_n$ with normalized out-degree in $(0.8849,1]$, and $L_n$ be the set of vertices with normalized out-degree in $(0.416,0.44057)$. The above claim implies that $H_n \cup L_n = V(T_n)$. We now show that no arcs in $T_n$ are directed from $L_n$ to $H_n$, once again using a coloring-scheme to acquire localized information in an extremal construction.

\begin{claim}\label{cut}
For every $x \in L_n$ and $y \in H_n$, $yx \in E(T_n)$.
\end{claim}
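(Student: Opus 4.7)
The plan is to mirror the flag-algebra approach of Claim~\ref{support}. Suppose for contradiction that $xy \in E(T_n)$ with $x \in L_n$ and $y \in H_n$. By Claim~\ref{support} we have $d^+(x)/n \in (0.416, 0.44057)$ and $d^+(y)/n > 0.8849$, and by Claim~\ref{sym} each of $x$ and $y$ is contained in $i(C_3^+, T_n)\binom{n-1}{3} + O(n^2)$ copies of $C_3^+$.

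The next step is to fix the ordered pair $(x, y)$ as a type of size two, incorporating the arc $x \to y$, and to color each remaining vertex of $T_n$ by one of four colors encoding its orientation relative to both $x$ and $y$ (i.e.\ membership in $N^{\pm}(x) \cap N^{\pm}(y)$). The sum of the color densities corresponding to $N^+(y)$ exceeds $0.8849$, and those corresponding to $N^+(x)$ lie in $(0.416, 0.44057)$. Writing out the $C_3^+$-counts through $x$ and through $y$ as sums over the four-colored four-vertex flags, together with the global bound $i(C_3^+, T_n) > 0.157500667$, produces a plain flag-algebra linear program analogous to the one displayed in Claim~\ref{support}.

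The goal is then to produce a numerical certificate showing this program is infeasible. Infeasibility contradicts the extremality of $T_n$ and so rules out any arc $xy$ with $x \in L_n$, $y \in H_n$, establishing the claim. The main obstacle I anticipate is the size and arithmetic of the required certificate, and the fact that the two degree constraints are of quite different magnitudes, so the program must be carefully scaled; once the constraints are correctly assembled, however, the solver pipeline used for Claim~\ref{support} should handle the verification, with the resulting certificate deposited at \oururl\ rather than printed in the text.
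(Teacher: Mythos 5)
Your set‑up matches the paper's — the same four‑coloring of $V(T_n)\setminus\{x,y\}$ by membership in $N^\pm(x)\cap N^\pm(y)$, the same degree and $C_3^+$‑regularity constraints, and the same global density bound — but the way you intend to close the argument does not work. You propose to impose the arc $x\to y$ in the type and derive a contradiction by showing the resulting linear program is \emph{infeasible}. The problem is that none of the constraints you list actually depend on the direction of the arc between $x$ and $y$. The coloring of the remaining vertices, the degree constraints $0.416\le\OO+\OI\le 0.44057$ and $\OO+\IO\ge 0.8849$, the rooted $C_3^+$ counts at $x$ and at $y$, and the global bound $i(C_3^+,T_n)>0.157500667$ are all properties of the $(n-2)$‑vertex colored tournament, and the contribution of $y$ to the $C_3^+$ count through $x$ (or of $x$ to the count through $y$) is $O(n^2)$, hence asymptotically invisible. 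The coloring induced by the actual extremal tournament therefore satisfies every one of your constraints regardless of which way the $xy$ arc points, so the program is provably feasible and no Farkas‑type certificate of infeasibility can exist.

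What is missing is an objective that actually sees the arc direction. The paper's proof keeps the same constraints but \emph{maximizes} the quantity
\[
\Bigl(\text{density of }\II\!\to\!\IO\text{ and }\OO\!\to\!\OI\text{ arcs}\Bigr)\;-\;\Bigl(\text{density of }\II\!\to\!\OI\text{ and }\OO\!\to\!\IO\text{ arcs}\Bigr),
\]
which equals (up to normalization) the number of $C_3^+$'s through $\{x,y\}$ that the orientation $x\to y$ creates minus the number that the orientation $y\to x$ creates. A flag‑algebra certificate shows this maximum is at most about $-0.0768$, so reversing $x\to y$ to $y\to x$ would strictly increase the number of $C_3^+$'s by $\Omega(n^2)$, contradicting extremality. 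This local‑switching objective is the key idea your proposal is missing; a pure feasibility argument cannot distinguish the two orientations.
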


\noindent
\begin{proof}
Let $x\in L_n$, $y \in H_n$, so $x$ has normalized out degree in $[0.415,0.441]$ and $y$ has normalized out-degree at least 0.8849. We color $V(T_n)-\{x,y\}$ with the following scheme, in which the top color represents the relation to $x$, and the bottom color represents the relation to $y$ (see also Figure~\ref{4colors}): 
\begin{itemize}
	\item Assign color black-black $\OO$~
to $N^+(x) \cap N^+(y)$,
	\item Assign color black-white $\OI$~
to $N^+(x) \cap N^-(y)$,
	\item Assign color white-black $\IO$~
    	 to $N^-(x) \cap N^+(y)$,
	\item Assign color white-white $\II$~
to $N^-(x) \cap N^-(y)$.
\end{itemize}

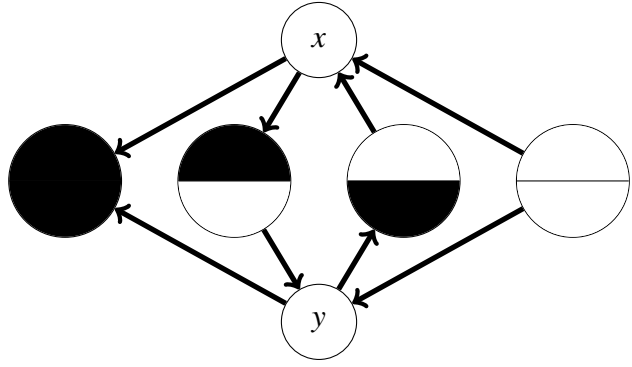
\begin{figure}[h]\centering
\begin{tikzpicture}[scale=.75]
	\node[shape=circle,draw=black,fill=white,minimum size=1cm] (x) at (4.5,2.5) {$x$};
	\node[shape=circle,draw=black,fill=white,minimum size=1cm] (y) at (4.5,-2.5) {$y$};
	\node[shape=circle split, circle split part fill={black,black}, minimum size=1.5cm, draw=black] (OO) at (  0,0) {\,\,\,\,}; 
	\node[shape=circle split, circle split part fill={black,white}, minimum size=1.5cm, draw=black] (OI) at (  3,0) {\,\,\,\,}; 
	\node[shape=circle split, circle split part fill={white,black}, minimum size=1.5cm, draw=black] (IO) at (  6,0) {\,\,\,\,}; 
	\node[shape=circle split, circle split part fill={white,white}, minimum size=1.5cm, draw=black] (II) at (  9,0) {\,\,\,\,};

	\draw[line width=2pt,->] (x) -> (OO);
	\draw[line width=2pt,->] (x) -> (OI);
	\draw[line width=2pt,->] (IO) -> (x);
	\draw[line width=2pt,->] (II) -> (x);
	\draw[line width=2pt,->] (y) -> (OO);
	\draw[line width=2pt,->] (y) -> (IO);
	\draw[line width=2pt,->] (II) -> (y);
	\draw[line width=2pt,->] (OI) -> (y);
\end{tikzpicture}

%
%
%
\caption{Four-coloring scheme for $T_n - \{x,y\}$}.\label{4colors}
\end{figure}

In order to model the out-degree assumptions, we will use the following constraints:
\begin{align*}
0.416 \leq \OO+\OI \leq 0.44057 \text{\hskip 1em and \hskip 1em }
0.8849 \leq \OO+\IO.
\end{align*}
As in the proof of Claim \ref{support}, any programs involving this color scheme can include a constraint to ensure that $x$ is in the right number of $C_3^+$ with vertices in $V(T_n)\backslash y$, and that $y$ is in the right number of $C_3^+$ with vertices in $V(T_n)\backslash x$. 

The purpose of this set up is to show that $x \to y$ results in fewer $C_3^+$ than $y \to x$, so we need to count $C_3^+$ which include both of these vertices, with the edge between the vertices in either direction. For this, we look again at Figure~\ref{4colors}. If $x \to y$, we create a $C_3^+$ with each arc 
$
\begin{tikzpicture}[baseline=-0.5ex]
\node (1) at (0,0) {};
\node (2) at (0.7,0) {};
\node (1') at (0,0) {$\II$};
\node (2') at (0.7,0) {$\IO$};
\draw[dedge] (1) -> (2);
\end{tikzpicture}
$
 and with each arc 
 $
\begin{tikzpicture}[baseline=-0.5ex]
\node (1) at (0,0) {};
\node (2) at (0.7,0) {};
\node (1') at (0,0) {$\OO$};
\node (2') at (0.7,0) {$\OI$};
\draw[dedge] (1) -> (2);
\end{tikzpicture}
$
. On the other hand, if $y \to x$, we create a $C_3^+$ with each arc
 $
\begin{tikzpicture}[baseline=-0.5ex]
\node (1) at (0,0) {};
\node (2) at (0.7,0) {};
\node (1') at (0,0) {$\II$};
\node (2') at (0.7,0) {$\OI$};
\draw[dedge] (1) -> (2);
\end{tikzpicture}
$
 and with each arc 
 $
\begin{tikzpicture}[baseline=-0.5ex]
\node (1) at (0,0) {};
\node (2) at (0.7,0) {};
\node (1') at (0,0) {$\OO$};
\node (2') at (0.7,0) {$\IO$};
\draw[dedge] (1) -> (2);
\end{tikzpicture}
$.

Similarly as above, we can now pose the following program bounding the difference between $C_3^+$ containing $x\to y$ and containing $y\to x$. Note that there are up to $96$ different $C_3^+$ in $T_n-\{x,y\}$ with $4$ colors. Also, when counting the $C_3^+$ in $T_n-y$ containing $x$, we have to account for the colors induced by the arcs with $y$.

\bigskip

	Objective: 
		maximize 
\[
\left(		
\begin{tikzpicture}[baseline=-0.5ex]
\node (1) at (0,-0.5) {};
\node (2) at (0,0.5) {};
\node (1') at (0,-0.5) {$\II$};
\node (2') at (0,0.5) {$\IO$};
\draw[dedge] (1) -> (2);
\end{tikzpicture}
+
\begin{tikzpicture}[baseline=-0.5ex]
\node (1) at (0,-0.5) {};
\node (2) at (0,0.5)  {};
\node (1') at (0,-0.5) {$\OO$};
\node (2') at (0,0.5)  {$\OI$};
\draw[dedge] (1) -> (2);
\end{tikzpicture}
\right)
-
\left(
\begin{tikzpicture}[baseline=-0.5ex]
\node (1) at (0,-0.5) {};
\node (2) at (0,0.5)  {};
\node (1') at (0,-0.5) {$\II$};
\node (2') at (0,0.5)  {$\OI$};
\draw[dedge] (1) -> (2);
\end{tikzpicture}
+
\begin{tikzpicture}[baseline=-0.5ex]
\node (1) at (0,-0.5) {};
\node (2) at (0,0.5) {};
\node (1') at (0,-0.5) {$\OO$};
\node (2') at (0,0.5)  {$\IO$};
\draw[dedge] (1) -> (2);
\end{tikzpicture}
\right)
.\]

Constraints: 
	\begin{align*}
		0.416 \leq&~ \OO+\OI \leq 0.44057 \\
		0.8849 \leq&~ \OO+\IO \\
		0.157500667 \leq&~ \mbox{ sum of all 4-colorings of }C_3^+ \\
		0.157500667 \leq&~ 
		\scalebox{0.5}{\begin{tikzpicture}[baseline=2.6ex]
				\node[shape=circle split, circle split part fill={black,black}, draw=black] (1) at (0,0) {}; 
				\node[shape=circle split, circle split part fill={black,black}, draw=black] (2) at (1,2) {}; 
				\node[shape=circle split, circle split part fill={black,black}, draw=black] (3) at (2,0) {}; 
				\draw[->,line width=2pt] (1) -> (2);
				\draw[->,line width=2pt] (2) -> (3);
				\draw[->,line width=2pt] (3) -> (1);
			\end{tikzpicture}}
			+
			\scalebox{0.5}{\begin{tikzpicture}[baseline=2.6ex]
				\node[shape=circle split, circle split part fill={black,black}, draw=black] (1) at (0,0) {}; 
				\node[shape=circle split, circle split part fill={black,black}, draw=black] (2) at (1,2) {}; 
				\node[shape=circle split, circle split part fill={black,white}, draw=black] (3) at (2,0) {}; 
				\draw[->,line width=2pt] (1) -> (2);
				\draw[->,line width=2pt] (2) -> (3);
				\draw[->,line width=2pt] (3) -> (1);
			\end{tikzpicture}}
			+
			\scalebox{0.5}{\begin{tikzpicture}[baseline=2.6ex]
				\node[shape=circle split, circle split part fill={black,black}, draw=black] (1) at (0,0) {}; 
				\node[shape=circle split, circle split part fill={black,white}, draw=black] (2) at (1,2) {}; 
				\node[shape=circle split, circle split part fill={black,white}, draw=black] (3) at (2,0) {}; 
				\draw[->,line width=2pt] (1) -> (2);
				\draw[->,line width=2pt] (2) -> (3);
				\draw[->,line width=2pt] (3) -> (1);
			\end{tikzpicture}}
			+
			\scalebox{0.5}{\begin{tikzpicture}[baseline=2.6ex]
				\node[shape=circle split, circle split part fill={black,white}, draw=black] (1) at (0,0) {}; 
				\node[shape=circle split, circle split part fill={black,white}, draw=black] (2) at (1,2) {}; 
				\node[shape=circle split, circle split part fill={black,white}, draw=black] (3) at (2,0) {}; 
				\draw[->,line width=2pt] (1) -> (2);
				\draw[->,line width=2pt] (2) -> (3);
				\draw[->,line width=2pt] (3) -> (1);
			\end{tikzpicture}}
			+
			\scalebox{0.5}{\begin{tikzpicture}[baseline=2.6ex]
				\node[shape=circle split, circle split part fill={white,black}, draw=black] (1) at (0,0) {}; 
				\node[shape=circle split, circle split part fill={white,black}, draw=black] (2) at (1,2) {}; 
				\node[shape=circle split, circle split part fill={black,black}, draw=black] (3) at (2,0) {}; 
				\draw[->,line width=2pt] (2) -> (1);
				\draw[->,line width=2pt] (2) -> (3);
				\draw[->,line width=2pt] (3) -> (1);
			\end{tikzpicture}}
			+
			\scalebox{0.5}{\begin{tikzpicture}[baseline=2.6ex]
				\node[shape=circle split, circle split part fill={white,black}, draw=black] (1) at (0,0) {}; 
				\node[shape=circle split, circle split part fill={white,black}, draw=black] (2) at (1,2) {}; 
				\node[shape=circle split, circle split part fill={black,white}, draw=black] (3) at (2,0) {}; 
				\draw[->,line width=2pt] (2) -> (1);
				\draw[->,line width=2pt] (2) -> (3);
				\draw[->,line width=2pt] (3) -> (1);
			\end{tikzpicture}}
			\\
			&+\scalebox{0.5}{\begin{tikzpicture}[baseline=2.6ex]
				\node[shape=circle split, circle split part fill={white,black}, draw=black] (1) at (0,0) {}; 
				\node[shape=circle split, circle split part fill={white,white}, draw=black] (2) at (1,2) {}; 
				\node[shape=circle split, circle split part fill={black,black}, draw=black] (3) at (2,0) {}; 
				\draw[->,line width=2pt] (2) -> (1);
				\draw[->,line width=2pt] (2) -> (3);
				\draw[->,line width=2pt] (3) -> (1);
			\end{tikzpicture}}
			+\scalebox{0.5}{\begin{tikzpicture}[baseline=2.6ex]
				\node[shape=circle split, circle split part fill={white,black}, draw=black] (1) at (0,0) {}; 
				\node[shape=circle split, circle split part fill={white,white}, draw=black] (2) at (1,2) {}; 
				\node[shape=circle split, circle split part fill={black,white}, draw=black] (3) at (2,0) {}; 
				\draw[->,line width=2pt] (2) -> (1);
				\draw[->,line width=2pt] (2) -> (3);
				\draw[->,line width=2pt] (3) -> (1);
			\end{tikzpicture}}
			+\scalebox{0.5}{\begin{tikzpicture}[baseline=2.6ex]
				\node[shape=circle split, circle split part fill={white,white}, draw=black] (1) at (0,0) {}; 
				\node[shape=circle split, circle split part fill={white,black}, draw=black] (2) at (1,2) {}; 
				\node[shape=circle split, circle split part fill={black,black}, draw=black] (3) at (2,0) {}; 
				\draw[->,line width=2pt] (2) -> (1);
				\draw[->,line width=2pt] (2) -> (3);
				\draw[->,line width=2pt] (3) -> (1);
			\end{tikzpicture}}
			+\scalebox{0.5}{\begin{tikzpicture}[baseline=2.6ex]
				\node[shape=circle split, circle split part fill={white,white}, draw=black] (1) at (0,0) {}; 
				\node[shape=circle split, circle split part fill={white,black}, draw=black] (2) at (1,2) {}; 
				\node[shape=circle split, circle split part fill={black,white}, draw=black] (3) at (2,0) {}; 
				\draw[->,line width=2pt] (2) -> (1);
				\draw[->,line width=2pt] (2) -> (3);
				\draw[->,line width=2pt] (3) -> (1);
			\end{tikzpicture}}
			+\scalebox{0.5}{\begin{tikzpicture}[baseline=2.6ex]
				\node[shape=circle split, circle split part fill={white,white}, draw=black] (1) at (0,0) {}; 
				\node[shape=circle split, circle split part fill={white,white}, draw=black] (2) at (1,2) {}; 
				\node[shape=circle split, circle split part fill={black,black}, draw=black] (3) at (2,0) {}; 
				\draw[->,line width=2pt] (2) -> (1);
				\draw[->,line width=2pt] (2) -> (3);
				\draw[->,line width=2pt] (3) -> (1);
			\end{tikzpicture}}
			+\scalebox{0.5}{\begin{tikzpicture}[baseline=2.6ex]
				\node[shape=circle split, circle split part fill={white,white}, draw=black] (1) at (0,0) {}; 
				\node[shape=circle split, circle split part fill={white,white}, draw=black] (2) at (1,2) {}; 
				\node[shape=circle split, circle split part fill={black,white}, draw=black] (3) at (2,0) {}; 
				\draw[->,line width=2pt] (2) -> (1);
				\draw[->,line width=2pt] (2) -> (3);
				\draw[->,line width=2pt] (3) -> (1);
			\end{tikzpicture}}\\
 		0.157500667 \leq&~
		\scalebox{0.5}{\begin{tikzpicture}[baseline=2.6ex]
				\node[shape=circle split, circle split part fill={black,black}, draw=black] (1) at (0,0) {}; 
				\node[shape=circle split, circle split part fill={black,black}, draw=black] (2) at (1,2) {}; 
				\node[shape=circle split, circle split part fill={black,black}, draw=black] (3) at (2,0) {}; 
				\draw[->,line width=2pt] (1) -> (2);
				\draw[->,line width=2pt] (2) -> (3);
				\draw[->,line width=2pt] (3) -> (1);
			\end{tikzpicture}}
			+
			\scalebox{0.5}{\begin{tikzpicture}[baseline=2.6ex]
				\node[shape=circle split, circle split part fill={black,black}, draw=black] (1) at (0,0) {}; 
				\node[shape=circle split, circle split part fill={black,black}, draw=black] (2) at (1,2) {}; 
				\node[shape=circle split, circle split part fill={white,black}, draw=black] (3) at (2,0) {}; 
				\draw[->,line width=2pt] (1) -> (2);
				\draw[->,line width=2pt] (2) -> (3);
				\draw[->,line width=2pt] (3) -> (1);
			\end{tikzpicture}}
			+
			\scalebox{0.5}{\begin{tikzpicture}[baseline=2.6ex]
				\node[shape=circle split, circle split part fill={black,black}, draw=black] (1) at (0,0) {}; 
				\node[shape=circle split, circle split part fill={white,black}, draw=black] (2) at (1,2) {}; 
				\node[shape=circle split, circle split part fill={white,black}, draw=black] (3) at (2,0) {}; 
				\draw[->,line width=2pt] (1) -> (2);
				\draw[->,line width=2pt] (2) -> (3);
				\draw[->,line width=2pt] (3) -> (1);
			\end{tikzpicture}}
			+
			\scalebox{0.5}{\begin{tikzpicture}[baseline=2.6ex]
				\node[shape=circle split, circle split part fill={white,black}, draw=black] (1) at (0,0) {}; 
				\node[shape=circle split, circle split part fill={white,black}, draw=black] (2) at (1,2) {}; 
				\node[shape=circle split, circle split part fill={white,black}, draw=black] (3) at (2,0) {}; 
				\draw[->,line width=2pt] (1) -> (2);
				\draw[->,line width=2pt] (2) -> (3);
				\draw[->,line width=2pt] (3) -> (1);
			\end{tikzpicture}}
			+
			\scalebox{0.5}{\begin{tikzpicture}[baseline=2.6ex]
				\node[shape=circle split, circle split part fill={black,white}, draw=black] (1) at (0,0) {}; 
				\node[shape=circle split, circle split part fill={black,white}, draw=black] (2) at (1,2) {}; 
				\node[shape=circle split, circle split part fill={black,black}, draw=black] (3) at (2,0) {}; 
				\draw[->,line width=2pt] (2) -> (1);
				\draw[->,line width=2pt] (2) -> (3);
				\draw[->,line width=2pt] (3) -> (1);
			\end{tikzpicture}}
			+
			\scalebox{0.5}{\begin{tikzpicture}[baseline=2.6ex]
				\node[shape=circle split, circle split part fill={black,white}, draw=black] (1) at (0,0) {}; 
				\node[shape=circle split, circle split part fill={black,white}, draw=black] (2) at (1,2) {}; 
				\node[shape=circle split, circle split part fill={white,black}, draw=black] (3) at (2,0) {}; 
				\draw[->,line width=2pt] (2) -> (1);
				\draw[->,line width=2pt] (2) -> (3);
				\draw[->,line width=2pt] (3) -> (1);
			\end{tikzpicture}}
			\\
			&+\scalebox{0.5}{\begin{tikzpicture}[baseline=2.6ex]
				\node[shape=circle split, circle split part fill={black,white}, draw=black] (1) at (0,0) {}; 
				\node[shape=circle split, circle split part fill={white,white}, draw=black] (2) at (1,2) {}; 
				\node[shape=circle split, circle split part fill={black,black}, draw=black] (3) at (2,0) {}; 
				\draw[->,line width=2pt] (2) -> (1);
				\draw[->,line width=2pt] (2) -> (3);
				\draw[->,line width=2pt] (3) -> (1);
			\end{tikzpicture}}
			+\scalebox{0.5}{\begin{tikzpicture}[baseline=2.6ex]
				\node[shape=circle split, circle split part fill={black,white}, draw=black] (1) at (0,0) {}; 
				\node[shape=circle split, circle split part fill={white,white}, draw=black] (2) at (1,2) {}; 
				\node[shape=circle split, circle split part fill={white,black}, draw=black] (3) at (2,0) {}; 
				\draw[->,line width=2pt] (2) -> (1);
				\draw[->,line width=2pt] (2) -> (3);
				\draw[->,line width=2pt] (3) -> (1);
			\end{tikzpicture}}
			+\scalebox{0.5}{\begin{tikzpicture}[baseline=2.6ex]
				\node[shape=circle split, circle split part fill={white,white}, draw=black] (1) at (0,0) {}; 
				\node[shape=circle split, circle split part fill={black,white}, draw=black] (2) at (1,2) {}; 
				\node[shape=circle split, circle split part fill={black,black}, draw=black] (3) at (2,0) {}; 
				\draw[->,line width=2pt] (2) -> (1);
				\draw[->,line width=2pt] (2) -> (3);
				\draw[->,line width=2pt] (3) -> (1);
			\end{tikzpicture}}
			+\scalebox{0.5}{\begin{tikzpicture}[baseline=2.6ex]
				\node[shape=circle split, circle split part fill={white,white}, draw=black] (1) at (0,0) {}; 
				\node[shape=circle split, circle split part fill={black,white}, draw=black] (2) at (1,2) {}; 
				\node[shape=circle split, circle split part fill={white,black}, draw=black] (3) at (2,0) {}; 
				\draw[->,line width=2pt] (2) -> (1);
				\draw[->,line width=2pt] (2) -> (3);
				\draw[->,line width=2pt] (3) -> (1);
			\end{tikzpicture}}
			+\scalebox{0.5}{\begin{tikzpicture}[baseline=2.6ex]
				\node[shape=circle split, circle split part fill={white,white}, draw=black] (1) at (0,0) {}; 
				\node[shape=circle split, circle split part fill={white,white}, draw=black] (2) at (1,2) {}; 
				\node[shape=circle split, circle split part fill={black,black}, draw=black] (3) at (2,0) {}; 
				\draw[->,line width=2pt] (2) -> (1);
				\draw[->,line width=2pt] (2) -> (3);
				\draw[->,line width=2pt] (3) -> (1);
			\end{tikzpicture}}
			+\scalebox{0.5}{\begin{tikzpicture}[baseline=2.6ex]
				\node[shape=circle split, circle split part fill={white,white}, draw=black] (1) at (0,0) {}; 
				\node[shape=circle split, circle split part fill={white,white}, draw=black] (2) at (1,2) {}; 
				\node[shape=circle split, circle split part fill={white,black}, draw=black] (3) at (2,0) {}; 
				\draw[->,line width=2pt] (2) -> (1);
				\draw[->,line width=2pt] (2) -> (3);
				\draw[->,line width=2pt] (3) -> (1);
			\end{tikzpicture}}
\end{align*}

\bigskip

We find that the solution to this program is bounded above by $-0.077$:
\[
\frac{-94611767053769387132533069422799745047344297}{1220703125000000000000000000000000000000000000}
 \approx -0.077506,
\]
implying that $(y \to x)$ results in at least $0.077 \cdot \binom{n}{2}$ more copies of $C_3^+$ than $(x \to y)$ in $T_n$ for sufficiently large $n$, proving our claim. Certificates can be found at \oururl.
\end{proof}

Having determined the behavior of the relationship between $H_n$ and $L_n$, we now focus on the internal behavior of $H_n$. The following claim implies that, for large enough $n$, the overall structure of $T_n$ iterates into $H_n$.

\begin{claim}\label{cl:Hn}
 $I(C_3^+,T_n[H_n]) = I(C_3^+,|H_n|)$.
 \end{claim}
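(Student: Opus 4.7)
The plan is to show that the number of copies of $C_3^+$ in $T_n$ that use at least one vertex of each of $H_n,L_n$ depends only on the cut structure (given by Claim~\ref{cut}) and on $T_n[L_n]$, not on $T_n[H_n]$. Once we have a clean decomposition in which the only term that depends on the internal structure of $H_n$ is $I(C_3^+,T_n[H_n])$ itself, a one-line swap argument using extremality of $T_n$ finishes the proof.

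First I would classify each copy of $C_3^+$ by how its four vertices are distributed between $H_n$ and $L_n$. Recall that $C_3^+$ has a source $s$ with three out-neighbors $a,b,c$ inducing a cyclic triangle. By Claim~\ref{cut}, every arc between $H_n$ and $L_n$ is directed from $H_n$ to $L_n$. Consequently a cyclic triangle cannot have vertices in both parts (its three arcs would then include arcs in both directions across the cut), so $\{a,b,c\}\subseteq H_n$ or $\{a,b,c\}\subseteq L_n$. Moreover, if $s\in L_n$ then all out-neighbors of $s$ lie in $L_n$, forcing the whole copy into $L_n$. The only remaining mixed configuration is $s\in H_n$ with $\{a,b,c\}\subseteq L_n$ forming a $C_3$. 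This yields the exact identity
\[
I(C_3^+,T_n)=I(C_3^+,T_n[H_n])+I(C_3^+,T_n[L_n])+|H_n|\cdot I(C_3,T_n[L_n]).
\]

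Now I would argue by contradiction. Suppose $I(C_3^+,T_n[H_n])<I(C_3^+,|H_n|)$ and let $T^\ast$ be any tournament on $|H_n|$ vertices achieving $I(C_3^+,|H_n|)$. Form $T_n'$ from $T_n$ by replacing the induced subtournament on $H_n$ with a copy of $T^\ast$ on the same vertex set, leaving $T_n[L_n]$ and all arcs across the cut untouched (still oriented from $H_n$ to $L_n$). Then $T_n'$ is a tournament on $n$ vertices and, by the same decomposition applied to $T_n'$,
\[
I(C_3^+,T_n')=I(C_3^+,|H_n|)+I(C_3^+,T_n[L_n])+|H_n|\cdot I(C_3,T_n[L_n])>I(C_3^+,T_n),
\]
contradicting $I(C_3^+,T_n)=I(C_3^+,n)$. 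Hence $I(C_3^+,T_n[H_n])=I(C_3^+,|H_n|)$.

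The only real obstacle is the case analysis justifying the decomposition, and that is entirely forced by Claim~\ref{cut}; no additional flag-algebra computation is needed. Note that the argument does not require the out-degrees of the vertices of $T^\ast$ to satisfy the bounds from Claim~\ref{support}, since we only use $T_n'$ to contradict the extremality of $T_n$ as a counter of $C_3^+$.
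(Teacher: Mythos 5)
Your proposal is correct and takes essentially the same approach as the paper: classify copies of $C_3^+$ by their distribution across $H_n$ and $L_n$ using Claim~\ref{cut}, observe that only the copies lying entirely inside $H_n$ depend on $T_n[H_n]$, and conclude by a replacement/extremality argument. The paper states this more tersely, but the idea is identical; your added decomposition identity and the explicit swap argument are a clean and fully rigorous way to write it out.
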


\begin{proof}
The only copies of $C_3^+$ in $T_n$ are those chosen completely in $H_n$, completely in $L_n$, or with precisely 1 vertex chosen from $H_n$. The arcs in $T_n[H]$ impact neither the second nor third type of $C_3^+$. 
Therefore, $T_n[H_n]$ is extremal and the claim follows.
\end{proof}

We next focus on showing that the sizes of $H_n$ and $L_n$ are correct. While we could prove a slightly stronger bound here with the same method, we only need $|L_n|< (\frac67-\varepsilon)n$ later in Claim~\ref{QR2}.

\begin{claim}
$|L_n|< \frac67n-0.00001n\approx 0.85713n$.
\end{claim}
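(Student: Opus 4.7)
The plan is to suppose for contradiction that $|L_n|\ge 6n/7$, then pin down $q_L:=i(C_3^+,T_n[L_n])$ from two directions that turn out to be incompatible. Set $h=|H_n|/n$ and $\ell=|L_n|/n$. By Claim~\ref{cut}, every arc between $H_n$ and $L_n$ is directed $H\to L$, so every induced $C_3^+$ in $T_n$ belongs to exactly one of three classes: all four vertices in $H_n$; one vertex (the source) in $H_n$ together with a $C_3$ in $L_n$; or all four in $L_n$ (any other split would force a $C_3^+$-sink in $H_n$ or a $C_3^+$-source in $L_n$, both impossible). Write $q=i(C_3^+)$, $q_H=i(C_3^+,T_n[H_n])$, $q_C=i(C_3,T_n[L_n])$. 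Summing the count of $C_3^+$'s through $v$ (which by Claim~\ref{sym} is $(1+o(1))q\binom{n-1}{3}$) separately over $v\in H_n$ and over $v\in L_n$, and using the three-class decomposition, yields after dividing by $\binom{n}{4}$ the two identities
\[
h^{3}q_H+\ell^{3}q_C = q+o(1), \qquad \ell^{3}q_L+3h\ell^{2}q_C = q+o(1).
\]
Because $\sum_{v\in L_n}d^+(v)=\binom{|L_n|}{2}$ (using Claim~\ref{cut}) and each $d^+(v)\in(0.416n,0.44057n)$ (Claim~\ref{support}), the average $\ell n/2$ must lie in this range, forcing $h\in(0.119,0.168)$; then Claim~\ref{cl:Hn} and $|H_n|\to\infty$ give $q_H\to q$. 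Eliminating $q_C$ from the two displayed identities then yields the key formula
\[
q_L = \frac{q(1-4h+3h^4)}{\ell^4} + o(1).
\]

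The competing upper bound on $q_L$ comes from counting $C_3^+$'s in $T_n[L_n]$ by source. Since $N^+(v)\subseteq L_n$ for $v\in L_n$,
\[
I(C_3^+,T_n[L_n]) = \sum_{v\in L_n} I(C_3,T_n[N^+(v)]) \le \sum_{v\in L_n} \tfrac14\binom{d^+(v)}{3}.
\]
Under the constraints that $d^+(v)\in(0.416n,0.44057n)$ and that their sum equals $\binom{|L_n|}{2}$, convexity of $\binom{x}{3}$ maximizes $\sum\binom{d^+(v)}{3}$ at the extremal distribution that places a fraction $p=(\ell/2-0.416)/(0.44057-0.416)$ of degrees at $0.44057n$ and the remainder at $0.416n$. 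Dividing by $\binom{|L_n|}{4}$ yields the explicit bound
\[
q_L \le \frac{p\,(0.44057)^3+(1-p)(0.416)^3}{\ell^3} + o(1).
\]

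Finally, at $\ell=6/7$ (so $h=1/7$, $p\approx 0.5117$) the symmetry identity gives $q_L=(43/54)q\approx 0.12543$ while the degree-based bound evaluates to $\approx 0.12531$, a contradiction. A short numerical check verifies that the gap stays strictly positive throughout $\ell\in[6/7,\,0.88114]$, where the upper endpoint comes from the average-degree inequality $\ell n/2\le 0.44057n$, completing the proof. The main obstacle is the sharpness of this margin at $\ell=6/7$ (about $10^{-4}$), so the argument must use the exact value $q=\tfrac18(8-9\sqrt[3]{3}+3\sqrt[3]{9})$ and the explicit constants of Claim~\ref{support}; coarser bounds like $q_L\le q$ or $d^+(v)\le 0.44057n$ individually fail to close the gap.
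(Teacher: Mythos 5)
Your approach is genuinely different from the paper's. The paper handles this claim with a single two-colored flag algebra program (colors $L_n$ and $H_n$, forbids the $L_n\to H_n$ arc, assumes $6/7\le\ell\le 0.88114$, and lets the SDP certify that the $C_3^+$ density drops below $0.15750003 < i(C_3^+,T_n)$). You replace that with a purely analytic double-counting argument: averaging Claim~\ref{sym} separately over $H_n$ and $L_n$ to get two linear identities, eliminating $q_C$, and then playing the resulting lower bound on $q_L$ off against a convexity bound on $q_L$ from the degree distribution. The decomposition into three $C_3^+$ types is correct, the two displayed identities check out (both normalize to the stated forms), and the elimination producing $q_L=q(1-4h+3h^4)/\ell^4+o(1)$ is algebraically right. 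The convexity/majorization step bounding $\sum\binom{d^+(v)}{3}$ by the two-atom distribution at the endpoints of $(0.416n,0.44057n)$ is also sound. This is an appealing, more transparent route that makes the origin of the $6/7$ threshold visible, at the cost of giving up the SDP's automated verification.

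Two issues keep this from being a complete proof. First, the crucial step --- that the lower bound on $q_L$ exceeds the convexity upper bound for every $\ell\in[6/7,0.88114]$ --- is asserted via ``a short numerical check,'' not proved. Given that the margin at $\ell=6/7$ is only about $10^{-4}$, this needs an actual argument (for example, establishing monotonicity of the gap in $\ell$, or a rigorous interval evaluation) before it can replace the paper's certificate. Second, your closing remark that the argument ``must use the exact value $q=\tfrac18(8-9\sqrt[3]{3}+3\sqrt[3]{9})$'' is not right and would in fact be circular, since that value is the conclusion of the theorem, not yet established at this point. What is available (and, as your own numbers show, sufficient) is only the two-sided bound $q\in(0.157500667,0.157500672)$ from the earlier flag algebra claim; your lower bound on $q_L$ should be phrased as $q_L\ge 0.157500667\cdot(1-4h+3h^4)/\ell^4-o(1)$, which already clears the degree bound at $\ell=6/7$.
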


\noindent
\begin{proof}
First, since there are no arcs from $L_n$ to $H_n$, the average out-degree of vertices in $L_n$ is $\frac{|L_n|-1}{2}$, so by Claim~\ref{support}
\[
\tfrac1{n}|L_n| \in (0.832,0.88114).
\]
We would like a tighter upper bound, so we pose the following program wherein we color the vertices in $L_n$ black and the vertices in $H_n$ white. In this program, we assume that $|L_n|\ge \frac67n-\varepsilon$ and show that the density of $C_3^+$ is then bounded above by a bound smaller than in our construction, implying the claim. We note as well that $\begin{tikzpicture} \node[shape=circle,draw=black,fill=black,inner sep=2pt] (1) at (0,0){}; \node[shape=circle,draw=black,fill=white,inner sep=2pt] (2) at (0.7,0){}; \draw[dedge] (1) -> (2); \end{tikzpicture}$ is a forbidden subgraph by Claim \ref{cut}, so we include this as a constraint in the program as well.
\begin{quote} 
	Objective: \begin{quote}
		maximize \begin{tikzpicture}[baseline=1.3ex,scale=0.65]
			\node[shape=circle,draw=black,fill=black,inner sep=2pt] (1) at (0,0) {};
			\node[shape=circle,draw=black,fill=black,inner sep=2pt] (2) at (1,0) {};
			\node[shape=circle,draw=black,fill=black,inner sep=2pt] (3) at (1,1) {};
			\node[shape=circle,draw=black,fill=black,inner sep=2pt] (4) at (0,1) {};
			\draw[dedge] (1) -> (2);
			\draw[dedge] (2) -> (3);
			\draw[dedge] (3) -> (4);
			\draw[dedge] (4) -> (2);
			\draw[dedge] (1) -> (3);
			\draw[dedge] (1) -> (4);
		\end{tikzpicture} + \begin{tikzpicture}[baseline=1.3ex,scale=0.65]
			\node[shape=circle,draw=black,fill=white,inner sep=2pt] (1) at (0,0) {};
			\node[shape=circle,draw=black,fill=black,inner sep=2pt] (2) at (1,0) {};
			\node[shape=circle,draw=black,fill=black,inner sep=2pt] (3) at (1,1) {};
			\node[shape=circle,draw=black,fill=black,inner sep=2pt] (4) at (0,1) {};
			\draw[dedge] (1) -> (2);
			\draw[dedge] (2) -> (3);
			\draw[dedge] (3) -> (4);
			\draw[dedge] (4) -> (2);
			\draw[dedge] (1) -> (3);
			\draw[dedge] (1) -> (4);
		\end{tikzpicture} + \begin{tikzpicture}[baseline=1.3ex,scale=0.65]
			\node[shape=circle,draw=black,fill=white,inner sep=2pt] (1) at (0,0) {};
			\node[shape=circle,draw=black,fill=white,inner sep=2pt] (2) at (1,0) {};
			\node[shape=circle,draw=black,fill=white,inner sep=2pt] (3) at (1,1) {};
			\node[shape=circle,draw=black,fill=white,inner sep=2pt] (4) at (0,1) {};
			\draw[dedge] (1) -> (2);
			\draw[dedge] (2) -> (3);
			\draw[dedge] (3) -> (4);
			\draw[dedge] (4) -> (2);
			\draw[dedge] (1) -> (3);
			\draw[dedge] (1) -> (4);
		\end{tikzpicture}
		\end{quote}
	Constraints: \begin{quote}
		$6/7-0.00001 \leq \begin{tikzpicture}
			\node[shape=circle,draw=black,fill=black,inner sep=2pt] (1) at (0,0) {};
		\end{tikzpicture}\le 0.88114$ 
		\end{quote}
	Forbidden Subgraph: \begin{quote}
		\begin{tikzpicture}
			\node[shape=circle,draw=black,fill=black,inner sep=2pt] (1) at (0,0) {};
			\node[shape=circle,draw=black,fill=white,inner sep=2pt] (2) at (0.7,0) {};
			\draw[dedge] (1) -> (2);
		\end{tikzpicture}
		\end{quote}
\end{quote}
This program is bounded above by 
\[
\frac{141750035467643236211349281693358548643157719744000}{900000000000000000000000000000000000000000000000000 }  < 0.15750004<i(C_3^+,T_n),
\]
thus cannot be extremal, and so implies that $T_n$ must satisfy $|L_n| < \frac67n-0.00001$. Certificates can be found at \oururl.
\end{proof}

We next aim to prove that the sequence $(T_n[L_n])_{n=1}^\infty$ is quasi-random. To do so we prove Claim~\ref{QR}, a consequence of the characterization of quasi-random tournaments in (Chung and Graham~\cite{MR1106530}). They list 11 different equivalent properties characterizing quasi-random sequences $G_n$ of tournaments on $n$ vertices, but we will only use the first two.
\begin{itemize}
\item[$P_1$:]  For every fixed tournament $H$, $i(H,G_n)$ converges to the expected value in the random tournament on $n$ vertices.
\item[$P_2$:] $\displaystyle{\lim_{n\to\infty}i(C_4,G_n) = \tfrac{3}{8}}$.
\end{itemize}
Since graphons are completely determined by subgraph densities, $P_1$ implies that every quasi-random sequence of tournaments has the same limit graphon. The following claim is an addition to the 11 properties listed in~\cite{MR1106530}, tailored to our problem.

\begin{claim}\label{QR}
A sequence of tournaments $(G_n)_{n=1}^\infty$ with $|G| = n$ is quasi-random if and only if 
\[
\lim_{n\to\infty} i(C_3,G_n) = \tfrac{1}{4}~ \text{ and }
	\lim_{n\to\infty} i(C_3^+,G_n)= \tfrac{1}{8}.
\]
\end{claim}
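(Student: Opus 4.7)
My plan is to reduce the problem to showing $\lim_{n\to\infty} i(C_4, G_n) = 3/8$ and then invoke property $P_2$. The forward direction (quasi-randomness implying both density limits) is immediate from $P_1$, since the random tournament realizes $i(C_3) = 1/4$ and $i(C_3^+) = 1/8$. For the converse, I would first observe that the identity~\eqref{eqTC} proved inside Theorem~\ref{TC4_Full},
\[
i(C_3, G_n) = \tfrac{1}{2} i(C_4, G_n) + \tfrac{1}{4} i(C_3^+, G_n) + \tfrac{1}{4} i(C_3^-, G_n),
\]
reduces the goal to showing $\lim i(C_3^-, G_n) = 1/8$: once this is in hand, plugging the three known limits into the identity gives $\lim i(C_4, G_n) = 3/8$, and $P_2$ closes the argument.

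The main step is therefore to upgrade $i(C_3^+) \to 1/8$ to $i(C_3^-) \to 1/8$. I would do this via a pair of identities, each proved by a routine double-count: choose a vertex $v$ and a $3$-subset $S \subseteq N^+(v)$ (resp.\ $S \subseteq N^-(v)$), and observe that $\{v\}\cup S$ is a $4$-set in which $v$ is the unique source (resp.\ sink), so it must be either a $TT_4$ or a $C_3^+$ (resp.\ $C_3^-$). Since each of these tournaments has a unique source and sink, this yields
\[
\sum_{v} \binom{d^+(v)}{3} = I(TT_4, G_n) + I(C_3^+, G_n), \qquad \sum_{v} \binom{d^-(v)}{3} = I(TT_4, G_n) + I(C_3^-, G_n).
\]
Subtracting eliminates $I(TT_4, G_n)$ and leaves $I(C_3^+, G_n) - I(C_3^-, G_n) = \sum_v \bigl[\binom{d^+(v)}{3} - \binom{d^-(v)}{3}\bigr]$.

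To show the right-hand side is $o\bigl(\binom{n}{4}\bigr)$, I would set $m = (n-1)/2$ and $\epsilon_v = d^+(v) - m$, so that $d^-(v) = m - \epsilon_v$ and $\sum_v \epsilon_v = 0$. A short expansion shows that $\binom{m+\epsilon}{3} - \binom{m-\epsilon}{3}$ is an odd polynomial in $\epsilon$, equal to $\tfrac{1}{3}\bigl[(3m^2-6m+2)\epsilon + \epsilon^3\bigr]$, so the linear contribution cancels after summing and only $\tfrac{1}{3}\sum_v \epsilon_v^3$ remains. Meanwhile, the standard identity $I(C_3, G_n) = \binom{n}{3} - \sum_v \binom{d^+(v)}{2}$ translates the hypothesis $i(C_3, G_n)\to 1/4$ into $\sum_v \epsilon_v^2 = o(n^3)$. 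Since $|\epsilon_v| \le (n-1)/2$, we get $\bigl|\sum_v \epsilon_v^3\bigr| \le \tfrac{n}{2}\sum_v \epsilon_v^2 = o(n^4)$, finishing the claim. The main obstacle is spotting the two cubic identities above; once they are written down, the argument reduces to the oddness observation combined with the near-regularity of the degree sequence forced by the $C_3$ hypothesis.
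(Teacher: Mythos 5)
Your proof is correct, and it takes a genuinely different route from the paper's. Both arguments start from the dispersion identity~\eqref{eqTC} and aim for $\lim i(C_4,G_n)=3/8$ so that $P_2$ can be applied, but they fill in the missing quantity differently. The paper first shows $\lim i(TT_4,G_n)=3/8$: using $I(C_3^+,G_n)=\sum_v I(C_3,G_n[N^+(v)])$ together with near-regularity (from $i(C_3)\to 1/4$) and $i(C_3,\cdot)\le 1/4$, a sandwich argument forces $i(C_3,G_n[N^+(v)])=1/4+o(1)$ for almost all $v$, hence $i(TT_4,G_n)=3/8+o(1)$; then the paper writes a second averaging identity for $\tfrac13 i(TT_3)$ and subtracts it from~\eqref{eqTC}, which cancels the $C_3^+$ and $C_3^-$ terms and isolates $i(C_4)$ in terms of $i(TT_4)$. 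You instead go after $\lim i(C_3^-,G_n)=1/8$ directly: the double counts $\sum_v\binom{d^+(v)}{3}=I(TT_4)+I(C_3^+)$ and $\sum_v\binom{d^-(v)}{3}=I(TT_4)+I(C_3^-)$ are both correct (the source, respectively sink, is the unique vertex dominating, respectively dominated by, the others in exactly $TT_4$ and $C_3^\pm$), subtracting kills $I(TT_4)$, the difference $\binom{m+\epsilon}{3}-\binom{m-\epsilon}{3}=\tfrac13\bigl[(3m^2-6m+2)\epsilon+\epsilon^3\bigr]$ is an odd polynomial so the linear term sums to zero, and the standard identity $I(C_3,G_n)=\binom n3-\sum_v\binom{d^+(v)}2$ converts $i(C_3)\to1/4$ into $\sum_v\epsilon_v^2=o(n^3)$, giving $\bigl|\sum_v\epsilon_v^3\bigr|\le\tfrac{n}{2}\sum_v\epsilon_v^2=o(n^4)$. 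Your approach is more elementary, relying only on explicit degree-sequence algebra rather than the sandwich argument, and it delivers the slightly stronger intermediate fact $i(C_3^+)-i(C_3^-)=o(1)$ for any near-regular sequence; the paper's method instead localizes the $C_3$-density to out-neighborhoods, a technique that reappears elsewhere in the paper, which is presumably why they chose it.
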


\noindent
\begin{proof} The "only if" statement follows immediately from property $P_1$, so we  concern ourselves with proving the "if" statement. Let $(G_n)_{n=1}^\infty$ be a sequence of tournaments so that $|G_n| = n$, and recall from Proposition~\ref{C3} that (near) regular tournaments are the tournaments which maximize the number of induced copies of $C_3$.

So, assume that 
\[
\lim_{n\to\infty} i(C_3,G_n) = \tfrac{1}{4}~ \text{ and }
	\lim_{n\to\infty} i(C_3^+,G_n)= \tfrac{1}{8},
\]
 and observe that this implies that the degrees in the tournaments are concentrated around $\frac{n}2$, i.e. all but $o(n)$ vertices have out-degree $\frac{n}2+o(n)$. Now observe 
 that 
 \begin{align*}
\frac{1}{8}\binom{n}{4} + o(n^4)&= I(C_3^+,G_n) \\
&= \sum_{v \in V(G_n)} I(C_3,G_n[N^+(v)]) \\
	&= \sum_{v \in V(G_n)} i(C_3,G_n[N^+(v)])\binom{n/2}{3} + o(n^3),\text{ by degree concentration} \\
	&\leq \sum_{v \in V(G_n)} \left(\frac{1}{4}+o(1)\right)\binom{n/2}{3} + o(n^3), \text{ by the inducibilty of $C_3$} \\
	&= \frac{1}{8}\binom{n}{4} + o(n^4).
\end{align*}
This implies that $i(C_3,G_n[N^+(v)])=\frac14+o(1)$  for all but at most $o(n)$ vertices $v \in V(G_n)$. This equality also implies that 
$ i(TT_4,G_n)= \frac{3}{8}+o(1)$. Now
\begin{align*}
\frac14+o(1)&=i(C_3,G_n)=\frac12 i(C_4,G_n)+\frac14 i(C_3^+,G_n)+\frac14 i(C_3^-,G_n), \mbox{ and}\\
\frac14+o(1)&=\frac13 i(TT_3,G_n)=\frac16 i(C_4,G_n)+\frac14 i(C_3^+,G_n)+\frac14 i(C_3^-,G_n)+\frac13 i(TT_4,G_n),
\end{align*}
so
\[
o(1)=i(C_3,G_n)-\frac13 i(TT_3,G_n)=\frac13i(C_4,G_n)-\frac18+o(1),
\]
and thus $i(C_4,G_n)=\frac38+o(1)$. This last statement is equivalent to property $P_2$.
%
%
\end{proof}

\begin{claim}\label{mainIE}
In any tournament $T$ on $n$ vertices, $i(C_3^+,T)\le  \frac{1}{8} + \frac{2}{3}(\frac{1}{4} - i(C_3,T))+o(1)$.
\end{claim}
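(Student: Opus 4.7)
The plan is to reduce the claimed inequality to a one-variable moment problem on the normalized out-degree distribution of $T$, and then to close it with a single pointwise polynomial inequality.

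First I will express both $i(C_3^+,T)$ and $i(C_3,T)$ in terms of the out-degree sequence. Every induced $C_3^+$ has a unique source vertex, so
\[
I(C_3^+,T) \;=\; \sum_{v\in V(T)} I\bigl(C_3,\,T[N^+(v)]\bigr),
\]
and every induced $TT_3$ has a unique source, so
\[
I(C_3,T) \;=\; \binom{n}{3} \;-\; \sum_{v\in V(T)} \binom{d^+(v)}{2}.
\]
Applying Proposition~\ref{C3} inside each out-neighborhood gives $I(C_3,T[N^+(v)]) \le d^+(v)^3/24$, which yields an upper bound on $I(C_3^+,T)$ purely in terms of $\sum_v d^+(v)^3$.

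Next I pass to the normalized out-degree $X = d^+(V)/n$ of a uniformly random vertex $V$. The identity $\sum_v d^+(v) = \binom{n}{2}$ forces $\mathbb{E}[X] = \tfrac{1}{2} + O(1/n)$. Dividing the two counting identities above by $\binom{n}{4}$ and $\binom{n}{3}$ respectively transforms them into
\[
i(C_3^+,T) \;\le\; \mathbb{E}[X^3] + o(1)
\qquad\text{and}\qquad
i(C_3,T) \;=\; 1 - 3\,\mathbb{E}[X^2] + o(1),
\]
so the target inequality collapses to
\[
\mathbb{E}[X^3] \;\le\; 2\,\mathbb{E}[X^2] - \tfrac{3}{8} + o(1)
\]
subject to $X \in [0,1]$ and $\mathbb{E}[X] = \tfrac{1}{2} + o(1)$.

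The main obstacle, and really the only step that requires insight, will be guessing the correct auxiliary polynomial to convert this moment bound into a pointwise bound. The random tournament $X \equiv \tfrac{1}{2}$ must produce equality, which forces a double root at $x = \tfrac{1}{2}$; adding a simple root at the boundary value $x = 1$ then produces the identity
\[
(x-1)\bigl(x-\tfrac{1}{2}\bigr)^2 \;=\; x^3 - 2x^2 + \tfrac{5}{4}x - \tfrac{1}{4},
\]
which is nonpositive on $[0,1]$. Taking expectations and substituting $\mathbb{E}[X]=\tfrac{1}{2}+O(1/n)$ absorbs the linear term into a constant $\tfrac{5}{8}$, yielding exactly the required moment bound; once this polynomial is in hand everything else is routine bookkeeping, and no flag-algebra input is needed for this claim.
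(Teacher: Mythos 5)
Your proof is correct, and it takes a genuinely different and more elementary route than the paper. The paper proves $3i(C_3^+,T)+2i(C_3,T)\le\frac78+o(1)$ by plain flag algebra, i.e., an SDP computation whose certificate is only made available at an external URL. You instead reduce everything to the out-degree distribution: the identity $I(C_3^+,T)=\sum_v I(C_3,T[N^+(v)])$ combined with the fact (implicit in the discussion preceding Proposition~\ref{C3}) that a tournament on $m$ vertices has at most $\frac{m(m^2-1)}{24}<\frac{m^3}{24}$ copies of $C_3$ gives $i(C_3^+,T)\le\mathbb{E}[X^3]+o(1)$, while $I(TT_3,T)=\sum_v\binom{d^+(v)}{2}$ gives $i(C_3,T)=1-3\mathbb{E}[X^2]+o(1)$, where $X$ is the normalized out-degree of a uniform random vertex with $\mathbb{E}[X]=\frac12+O(1/n)$. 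The target then becomes $\mathbb{E}[X^3]\le 2\mathbb{E}[X^2]-\frac38+o(1)$, which follows by taking expectations of the pointwise inequality $(x-1)\left(x-\tfrac12\right)^2\le 0$ on $[0,1]$ and using $\mathbb{E}[X]=\frac12+o(1)$ to dispose of the linear term. All of the arithmetic checks out: $(x-1)\left(x-\tfrac12\right)^2=x^3-2x^2+\tfrac54 x-\tfrac14$, and $\tfrac54\cdot\tfrac12-\tfrac14=\tfrac38$. This buys you a self-contained, human-verifiable proof with an explicit polynomial certificate, whereas the paper's SDP proof relies on an opaque external certificate; the only thing the flag-algebra approach buys here is uniformity of method with the other claims in the section.
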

\begin{proof}
Using the plain flag algebra method, we show
\[
3i(C_3^+,T) +  2i(C_3,T) \leq \frac{7}{8}+o(1).
\]
The claim follows by rearranging the inequality.
Certificates can be found at \oururl.
\end{proof}

\begin{claim}\label{QR2}
The sequence $(T_n[L_n])$ is quasi-random.
\end{claim}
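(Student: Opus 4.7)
The plan is to verify the two hypotheses of Claim~\ref{QR} for $(T_n[L_n])$, namely that $i(C_3,T_n[L_n])\to 1/4$ and $i(C_3^+,T_n[L_n])\to 1/8$. Write $\alpha_n=|H_n|/n$, $d_n=i(C_3,T_n[L_n])$, $e_n=i(C_3^+,T_n[L_n])$, and $c^+:=i(C_3^+)$. Proposition~\ref{C3} gives $d_n\le 1/4$, while Claim~\ref{mainIE} applied to $T_n[L_n]$ gives $e_n\le \tfrac18+\tfrac{2}{3}(\tfrac14-d_n)+o(1)$.

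First I would set up a ``mass balance'' for $C_3^+$ in $T_n$. By Claim~\ref{cut} every arc between $H_n$ and $L_n$ is directed from $H_n$ to $L_n$, and a short degree-sequence check rules out copies of $C_3^+$ intersecting both sides other than one source in $H_n$ paired with a $C_3$ in $L_n$. Invoking Claim~\ref{cl:Hn} to write $i(C_3^+,T_n[H_n])=c^++o(1)$, together with extremality $i(C_3^+,T_n)=c^++o(1)$ and the usual binomial asymptotics, this yields
\[
(1-\alpha_n^4)\,c^+=(1-\alpha_n)^4\,e_n+4\alpha_n(1-\alpha_n)^3\,d_n+o(1).
\]

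Next I would substitute the Claim~\ref{mainIE} bound on $e_n$ into this identity and collect in $d_n$, producing
\[
(1-\alpha_n^4)\,c^+\le \frac{7}{24}(1-\alpha_n)^4+\frac{(1-\alpha_n)^3(14\alpha_n-2)}{3}\,d_n+o(1).
\]
The previous claim forces $\alpha_n>1/7$, and since its flag-algebra upper bound $0.15750003$ is strictly below $c^+>0.157500667$, a compactness argument gives a uniform gap $\alpha_n-1/7\ge\delta>0$; hence the coefficient of $d_n$ is bounded below by a positive constant. Maximizing in $d_n\in[0,1/4]$ is therefore attained at $d_n=1/4$, and a short algebraic simplification shows the maximum equals the construction value $\tfrac18(1-\alpha_n)^4+\alpha_n(1-\alpha_n)^3$. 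By the very definition of $c^+$ as $\sup_\alpha \frac{\alpha(1-\alpha)^3+\tfrac18(1-\alpha)^4}{1-\alpha^4}$, this is in turn bounded above by $(1-\alpha_n^4)\,c^+$.

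The chain of inequalities is thus pinched between $(1-\alpha_n^4)c^+$ and itself, forcing every step to be tight up to $o(1)$. Tightness of the maximization in $d_n$, combined with the uniform positivity of its coefficient, pins down $d_n=1/4+o(1)$, and then tightness of Claim~\ref{mainIE} gives $e_n=1/8+o(1)$. Claim~\ref{QR} then concludes that $(T_n[L_n])$ is quasi-random. The delicate point is the uniform gap $\alpha_n-1/7\ge\delta$: without it, the coefficient of $d_n$ could degenerate and extremality alone would not pin down $d_n$. This is the sole place where the strict numerical margin from the $|L_n|<6n/7$ bound is genuinely needed.
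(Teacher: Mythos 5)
Your argument is essentially the same as the paper's, just reorganized. The paper writes the chain with $L=|L_n|/n=1-\alpha_n$ and anchors both ends at the construction value $L^4\cdot\tfrac18+4(1-L)L^3\cdot\tfrac14$: the left end is obtained by noting that replacing $T_n[L_n]$ with a random tournament (keeping $H_n$ and the $H_n\to L_n$ arcs) produces a valid construction and so cannot beat the extremal $T_n$; your version states the same inequality as $c^+\ge f(\alpha_n)$, the iterated-construction lower bound. (One phrasing nit: this is not ``by the very definition of $c^+$'' --- $c^+$ is defined as $i(C_3^+)$, and the bound $c^+\ge\sup_\alpha f(\alpha)$ comes from the construction, which is in fact part of what the theorem proves.) The inner steps --- the mass balance from Claim~\ref{cl:Hn} and the arc condition, the substitution of Claim~\ref{mainIE}, the collection of a positive coefficient on $d_n$, and the maximization at $d_n=1/4$ using Proposition~\ref{C3} --- match the paper line for line.

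You do put your finger on a real soft spot that the paper elides. After pinching, the paper concludes $d_n=1/4+o(1)$ from the equality of the first and last terms, but the coefficient of $d_n$ is $L^3(4-\tfrac{14}{3}L)$, which vanishes as $L\to 6/7$. The preceding claim only gives the strict inequality $|L_n|<\tfrac67 n$ for each large $n$, which does not by itself prevent $L\to 6/7$; a uniform gap $L\le \tfrac67-\delta$ is genuinely needed. This gap does follow from the numerical margin in that claim's flag-algebra bound ($0.15750003$ versus $c^+>0.157500667$) together with the continuity of the relevant SDP bound in the degree threshold, which is the substance of your ``compactness argument.'' So your observation is correct and your fix is sound; the paper's writeup implicitly assumes this uniformity without saying so.
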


\noindent
\begin{proof} 
%
Let $L=\frac1n |L_n|$. 
By Claim~\ref{cl:Hn}, $i(C_3^+,T_n[H_n])=i(C_3^+,T_n)+o(1)$. Thus, the density of the $C_3^+$ which are not completely contained in $H_n$ is $(1-(1-L)^4)i(C_3^+,T_n)+o(1)$. We have, 
\begin{align*}
L^4\tfrac18+4(1-L)L^3\tfrac14 &\le
(1-(1-L)^4)i(C_3^+,T_n) +o(1)\\
&= L^4\cdot i(C_3^+,T_n[L_n]) + 4(1-L)L^3 \cdot i(C_3,T_n[L_n]) +o(1)\\
	&\leq L^4\cdot \op{\tfrac{1}{8} + \tfrac{2}{3}(\tfrac14 - i(C_3,T_n[L_n]))} + 4(1-L)L^3 \cdot i(C_3,T_n[L_n]) +o(1)\\
	&= \tfrac{7}{24} L^4+ L^3i(C_3,T_n[L_n])(4 - \tfrac{14}{3}L)+o(1)\\
	&\le \tfrac{7}{24} L^4+ L^3\tfrac14 (4 - \tfrac{14}{3}L) +o(1)\\
	&= L^4\tfrac18+4(1-L)L^3\tfrac14 +o(1).
\end{align*}
The first inequality is true as the left side is the value of the next term we would expect if we replaced $T_n[L_n]$ by a random tournament on the same vertices. The second inequality follows from Claim~\ref{mainIE}. For the last inequality, note that $0<L<\frac67-0.00001+o(1)$ implying $4 - \tfrac{14}{3}L>0.00004+o(1)$. Thus, the left side is maximized if and only if $C_3$ is maximized at $\frac14$. As the first and the last term in this chain of inequalities are equal up to $o(1)$, we have equality throughout. Thus  
$i(C_3,T_n[L_n])=\frac14+o(1)$ and $i(C_3^+,T_n[L_n])=\frac18+o(1)$, proving the claim using Claim~\ref{QR}.
\end{proof}

\begin{claim}
The normalized size of $L_n$ is $L = \frac{1}{5}\op{7 + \sqrt[3]{3} - 2\sqrt[3]{9}} + o(1)$, and our construction converges in the graphon language to the limit object for the inducibility of $C_3^+$.
\end{claim}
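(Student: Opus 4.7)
The plan is to leverage the structural information from the preceding claims to reduce the problem to a univariate optimization. Since Claim~\ref{cut} directs every mixed arc from $H_n$ to $L_n$, I would first decompose the copies of $C_3^+$ in $T_n$ by how their four vertices split between $H_n$ and $L_n$. A quick case check rules out exactly $2$ or $3$ vertices lying in $H_n$: with two $H_n$-vertices the induced tournament is forced to be $TT_4$, while with three $H_n$-vertices the single $L_n$-vertex is a global sink in the quadruple, which $C_3^+$ does not allow. Writing $\alpha = 1-L = |H_n|/n$, this yields
\begin{align*}
i(C_3^+, T_n) = \alpha^4\, i(C_3^+, T_n[H_n]) + 4\alpha(1-\alpha)^3\, i(C_3, T_n[L_n]) + (1-\alpha)^4\, i(C_3^+, T_n[L_n]) + o(1),
\end{align*}
where the middle term counts placements with the source of $C_3^+$ in $H_n$ and the cyclic triangle in $L_n$.

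Next, I would substitute what the earlier claims have already pinned down. Quasi-randomness of $T_n[L_n]$ gives $i(C_3, T_n[L_n]) \to 1/4$ and $i(C_3^+, T_n[L_n]) \to 1/8$, and Claim~\ref{cl:Hn} combined with $|H_n| = \alpha n \to \infty$ supplies $i(C_3^+, T_n[H_n]) = i(C_3^+) + o(1)$. Writing $f = i(C_3^+)$, the decomposition collapses to the self-referential identity
\begin{align*}
f(1 - \alpha^4) = \alpha(1-\alpha)^3 + \tfrac{1}{8}(1-\alpha)^4 + o(1),
\end{align*}
and extremality of $T_n$ forces the limiting $\alpha$ to maximize the rational function $g(\alpha) = \frac{\alpha(1-\alpha)^3 + (1-\alpha)^4/8}{1-\alpha^4}$ over the admissible range, which by the earlier bound $L < 6/7$ is contained in $\alpha \in (1/7, 1)$.

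The remaining step is a pure calculus computation: setting $g'(\alpha) = 0$ and clearing denominators produces a low-degree polynomial whose only root in the admissible interval, via a direct application of Cardano's formula, is $\alpha = \tfrac{1}{5}\bigl(2\sqrt[3]{9} - 2 - \sqrt[3]{3}\bigr)$. Hence $L = 1 - \alpha = \tfrac{1}{5}\bigl(7 + \sqrt[3]{3} - 2\sqrt[3]{9}\bigr) + o(1)$ and $f = g(\alpha) = \tfrac{1}{8}\bigl(8 - 9\sqrt[3]{3} + 3\sqrt[3]{9}\bigr)$. The uniqueness of the limit object then follows from uniqueness of each ingredient: the proportion $L$ is determined by the optimization, $T_n[L_n]$ is quasi-random (so determined as a graph limit), and $T_n[H_n]$ is itself extremal on $\alpha n$ vertices, so the same description applies recursively inside $H_n$, yielding exactly the iterated construction described in Figure~\ref{fig:iter}. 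The only real obstacle is the algebraic one of solving the cubic from $g'(\alpha) = 0$ and verifying that its root lies in the narrow feasible window $(1/7,1)$ as the unique global maximizer; every other piece is delivered by the preceding claims.
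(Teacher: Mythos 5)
Your proposal is correct and follows essentially the same route as the paper: decompose the $C_3^+$ count by the $H_n$/$L_n$ split using Claim~\ref{cut} to kill the mixed cases, plug in the quasi-randomness densities and $i(C_3^+,T_n[H_n])=i(C_3^+,T_n)+o(1)$ from Claim~\ref{cl:Hn}, solve the resulting self-referential identity for $i(C_3^+,T_n)$ as a function $g(\alpha)$, and maximize over $\alpha$ (a calculus step the paper already carried out in Section~2 when presenting the construction). Your write-up is more explicit than the paper's terse final claim, but there is no substantive difference in approach.
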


\noindent
\begin{proof}
We know that $L < 6/7+o(1)$, that $T_n[L_n]$ is quasi-random, that all arcs between $H_n$ and $L_n$ point towards $L_n$, and that $i(C_3^+,T_n[H_n])=i(C_3^+,T_n)+o(1)$ since $T_n[H_n]$ is extremal for $C_3^+$. Thus, 
\[
i(C_3^+,T_n) = L^4 \cdot \frac{1}{8} + \binom{4}{1}L^3(1-L) \cdot \frac{1}{4} + (1-L)^4 (i(C_3,T_n)+o(1)).
\]
This is maximized when $i(C_3^+,T_n)  = \frac{1}{8}\op{8 - 9\sqrt[3]{3} + \sqrt[3]{3^5}}+o(1)$ and $L= 
1-\alpha+o(1)$.
\end{proof}

We have thus shown that every extremal tournament matches our construction up to the choice of the sequence of quasi-random tournaments, completing the proof of this theorem.
\end{proof}

\section{Discussion}

In this section, we discuss some of the peculiarities of this problem and its solutions, including the novel strategies introduced in this paper. First and foremost, we know of no other inducibility problem for which all extremal constructions include a quasi-random component as in the case of $C_3^+$ and $C_3^-$ and ask the following question:

\begin{problem} For what classes of graphs (undirected or directed) do the extremal constructions for the corresponding inducibility problem involve non-trivial quasi-random components?
\end{problem}

For $C_3^+$, the extremal construction was conjectured by noting that our tournament can be decomposed into a source vertex and a $C_3$; described another way, we begin with an arc and blow up the head into a $C_3$. Essentially, we ask the following: for a digraph $G=(V,E)$ with cut $C = (S,T)$ and cut-set of size $|S| \cdot |T|$, for what structures $G[S]$ and $G[T]$ does the resulting inducibility problem have as extremal solutions constructions for which $\alpha \cdot 100\%$ of the vertices induce a ``typical random graph structure'' for some $\alpha \in (0,1)$? Natural candidates for consideration would include $G[T] \cong C_3$ and $G[S]$ isomorphic to any 2-vertex digraph or 3-vertex tournament.

Historically, flag algebra techniques have been leveraged to determine bounds on global graph densities. The models developed in Claims~\ref{support} and \ref{cut}, however, resulted in bounds on localized information. In the case of Claim~\ref{support}, we were able to determine something very powerful regarding the distribution of out-degrees in extremal constructions, namely that all vertices have normalized out-degrees in a very specific set. In the case of Claim~\ref{cut}, we were able to determine the direction of an arc between any pair of vertices which satisfy basic constraints related to their out-degrees.

Finally, we want to make an observation about Conjecture~\ref{conTC}. Let $k\ge 5$ be odd, and let $n>k$. Let $X\subset V(C_n)$ be a set of $k$ vertices such that $C_n[X]\cong C_k$. Observe that for every vertex $v\in V(C_n)\setminus X$, we have $C_n[X\cup \{x\}]\cong C_{k+1}$. If we now express $i(C_k,T)$ in a tournament $T$ in terms of densities of $(k+1)$-vertex graphs similarly to~\eqref{eqTC}, we can easily conclude that Conjecture~\ref{conTC} is true for $k+1$ if it is true for $k$, so it suffices to prove it for all odd $k$. Standard plain flag algebra computations give sharp bounds for $i(C_5)$ and $i(C_7)$, and further show that $C_n$ is $o(n^2)$ arc flips away from every extremal tournament for $C_5$ and $C_7$ (and thus for $C_6$ and $C_8$ by this observation), but we have not seriously tried to show the full conjecture for these cases, which would require to exactly determine the extremal tournaments. 

\section*{Acknowledgment}
This work used the computing resources at the Center for Computational Mathematics, University of Colorado Denver, including the Alderaan cluster, supported by the National Science Foundation award OAC-2019089.

\bibliographystyle{plainurl}
\bibliography{references}

\end{document}